\newtheorem{theorem}{Theorem}[section]
\newtheorem{proposition}[theorem]{Proposition}
\newtheorem{corollary}[theorem]{Corollary}
\theoremstyle{definition}
\newtheorem{example}[theorem]{Example}
\theoremstyle{remark}
\newtheorem{remark}[theorem]{Remark}
\def\qand{\quad\text{and}\quad}
\begin{document}

\title{Compositions with restricted parts} % and a analogue of Euler's Partition Theorem}
\author{Jia Huang}
\address{Department of Mathematics and Statistics, University of Nebraska at Kearney, NE 68849, USA}
%\curraddr{}
\email{huangj2@unk.edu}
%\thanks{ }

\begin{abstract}
Euler showed that the number of partitions of $n$ into distinct parts equals the number of partitions of $n$ into odd parts. This theorem was generalized by Glaisher and further by Franklin. Recently, Beck made three conjectures on partitions with restricted parts, which were confirmed analytically by Andrews and Chern and combinatorially by Yang.

Analogous to Euler's partition theorem, it is known that the number of compositions of $n$ with odd parts equals the number of compositions of $n+1$ with parts greater than one, as both numbers equal the Fibonacci number $F_n$. Recently, Sills provided a bijective proof for this result using binary sequences, and Munagi proved a generalization similar to Glaisher's result using the zigzag graphs of compositions. Extending Sills' bijection, we obtain a further generalization which is analogous to Franklin's result. We establish, both analytically and combinatorially, two closed formulas for the number of compositions with restricted parts appearing in our generalization. We also prove some composition analogues for the conjectures of Beck.
\end{abstract}

\keywords{Euler's partition theorem, composition, restricted parts}

\subjclass{05A15, 05A17, 05A19}

\maketitle

\section{Introduction}

Partitions and compositions are prevalent in enumerative combinatorics and also play important roles in many other fields, such as the symmetric function theory, the representation theory of symmetric groups and Hecke algebras, combinatorial Hopf algebras, etc.
See, for example, Andrews--Eriksson~\cite{AndrewsEriksson}, Grinberg--Reiner~\cite{GrinbergReiner}, and Heubach--Mansour~\cite{HeubachMansour}.

Using generating functions, Euler proved the following well-known theorem concerning partitions with restricted parts.

\begin{theorem}[Euler] 
The number of partitions of $n$ into distinct parts equals the number of partitions of $n$ into odd parts.
\end{theorem}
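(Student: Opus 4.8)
The plan is to show that the two counting sequences share the same generating function. First I would set $D(x)=\sum_{n\ge 0}d(n)x^n$ and $O(x)=\sum_{n\ge 0}o(n)x^n$, where $d(n)$ is the number of partitions of $n$ into distinct parts and $o(n)$ is the number of partitions of $n$ into odd parts; the theorem is then equivalent to the formal power series identity $D(x)=O(x)$. By encoding a partition through the set (respectively multiset) of parts it uses, I would derive the two Eulerian product formulas
\[
D(x)=\prod_{k\ge 1}(1+x^k) \qand O(x)=\prod_{j\ge 1}\frac{1}{1-x^{2j-1}}.
\]
Here each factor $1+x^k$ records whether the part $k$ is present, while each factor $(1-x^{2j-1})^{-1}=\sum_{m\ge 0}x^{m(2j-1)}$ records how many times the odd part $2j-1$ appears.

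The heart of the argument is the elementary factorization $1+x^k=\dfrac{1-x^{2k}}{1-x^k}$. Substituting this into the product for $D(x)$ gives
\[
D(x)=\prod_{k\ge 1}\frac{1-x^{2k}}{1-x^k}.
\]
I would then argue that the denominator factors $1-x^k$ with $k$ even are cancelled exactly by the numerator factors $1-x^{2k}$, leaving only the factors $1-x^k$ with $k$ odd in the denominator; this is precisely the step that transforms the distinct-parts product into the odd-parts product $O(x)$.

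The main obstacle is making this cancellation rigorous, since it rearranges an infinite product. I would handle it by truncation: for each fixed $n$, only finitely many factors affect the coefficient of $x^n$, so comparing $\prod_{k=1}^{N}(1+x^k)$ with $\prod_{j=1}^{N}(1-x^{2j-1})^{-1}$ for $N\ge n$ shows their degree-$n$ coefficients agree. This yields $D(x)=O(x)$ as formal power series, and extracting the coefficient of $x^n$ gives $d(n)=o(n)$. As an alternative more in keeping with the combinatorial theme of this paper, one could instead exhibit Glaisher's explicit bijection, which writes the multiplicity of each odd part in binary to split it into distinct parts and reverses this by factoring each distinct part as a power of two times an odd number.
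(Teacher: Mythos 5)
Your proof is correct and matches the approach the paper attributes to this result: the paper states Euler's theorem as classical background without reproducing a proof, noting only that Euler proved it ``using generating functions,'' which is exactly your argument via $\prod_{k\ge1}(1+x^k)=\prod_{k\ge1}\frac{1-x^{2k}}{1-x^k}=\prod_{j\ge1}(1-x^{2j-1})^{-1}$, with the truncation step correctly handling the rearrangement of the infinite product. No gaps.
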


Glaisher generalized Euler's partition theorem to the result below, which specializes to Euler's theorem when $k=2$.

\begin{theorem}[Glaisher] 
Given an integer $k\ge1$, the number of partitions of $n$ with no part occurring $k$ or more times equals the number of partitions of $n$ with no parts divisible by $k$.
\end{theorem}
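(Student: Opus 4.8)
The plan is to prove the identity by comparing the generating functions of the two sides, following the pattern of Euler's original argument. Let $A_k(n)$ denote the number of partitions of $n$ in which no part occurs $k$ or more times, and let $B_k(n)$ denote the number of partitions of $n$ with no part divisible by $k$. In a partition counted by $A_k(n)$ each part size $i\ge 1$ may be used $0,1,\dots,k-1$ times, while in a partition counted by $B_k(n)$ each allowed part size (those $i$ with $k\nmid i$) may be used any number of times, so I would record the two generating functions as
\[
\sum_{n\ge 0} A_k(n)\,q^n = \prod_{i\ge 1}\bigl(1+q^i+q^{2i}+\cdots+q^{(k-1)i}\bigr), \qquad \sum_{n\ge 0} B_k(n)\,q^n = \prod_{\substack{i\ge 1\\ k\nmid i}} \frac{1}{1-q^i},
\]
viewed as identities of formal power series, so that no convergence issue arises.

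The key step is a telescoping cancellation. I would rewrite each finite geometric sum as $1+q^i+\cdots+q^{(k-1)i} = (1-q^{ki})/(1-q^i)$, turning the first product into $\prod_{i\ge 1}(1-q^{ki})\big/\prod_{i\ge 1}(1-q^i)$. Splitting the denominator according to whether or not $k$ divides the index gives $\prod_{i\ge 1}(1-q^i) = \prod_{j\ge 1}(1-q^{kj})\cdot\prod_{k\nmid i}(1-q^i)$, and the factor $\prod_{j\ge 1}(1-q^{kj})$ cancels exactly against the numerator $\prod_{i\ge 1}(1-q^{ki})$. What remains is $\prod_{k\nmid i}1/(1-q^i)$, which is precisely the generating function for $B_k(n)$. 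Hence $A_k(n)=B_k(n)$ for all $n$, and specializing $k=2$ recovers Euler's theorem.

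In the combinatorial spirit of this paper I would also record a bijection, resting on the observation that every positive integer factors uniquely as $i\cdot k^j$ with $k\nmid i$ and $j\ge 0$. Given a partition with no part divisible by $k$ in which a part $i$ (necessarily $k\nmid i$) occurs with multiplicity $r_i$, I would expand $r_i=\sum_j a_{i,j}k^j$ in base $k$ with $0\le a_{i,j}<k$ and replace those $r_i$ copies of $i$ by, for each $j$, exactly $a_{i,j}$ copies of $ik^j$; this preserves the sum and produces distinct part sizes each used fewer than $k$ times. The only point requiring genuine care is the well-definedness of this map: one must check that distinct pairs $(i,j)$ with $k\nmid i$ yield distinct products $ik^j$ and that the digit bound $a_{i,j}<k$ matches the multiplicity bound in the target, both of which follow from the uniqueness of the factorization $m=ik^j$. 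This is the main (and only mild) obstacle, after which invertibility is immediate; but since the generating-function cancellation above is already self-contained, either route gives a complete proof.
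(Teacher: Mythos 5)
Your proposal is correct, and both of its routes are complete. A caveat on the comparison: the paper states Glaisher's theorem as classical background and offers no proof of its own, so there is nothing in-paper to measure you against directly; the closest touchstone is that the paper repeatedly refers to ``Glaisher's bijection'' (as used by Yang), and your combinatorial map is exactly that bijection. Your generating-function argument is the standard analytic proof: the factorization $1+q^i+\cdots+q^{(k-1)i}=(1-q^{ki})/(1-q^i)$, the splitting $\prod_{i\ge1}(1-q^i)=\prod_{j\ge1}(1-q^{kj})\prod_{k\nmid i}(1-q^i)$, and the cancellation are all valid as formal power series, since each infinite product has factors of the form $1+O(q^i)$ and the cancelled factor is an invertible series. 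Your bijective argument is also sound, and you correctly identify the one point needing care: well-definedness and invertibility both rest on the unique factorization $m=ik^j$ with $k\nmid i$, together with the uniqueness of base-$k$ digits, which gives the inverse map (write each part $m$ of multiplicity $a_m<k$ as $ik^j$ and contribute $a_mk^j$ to the multiplicity of $i$). The only loose end is the degenerate case $k=1$, where base-$k$ expansion is unavailable; there both sides count only the empty partition, and your generating-function identity still holds trivially (every factor equals $1$), so the theorem survives, but it would be worth a sentence noting that the bijection is only needed for $k\ge2$. In short: the analytic route buys brevity and self-containment; the bijection buys the refined, statistic-preserving information that underlies the Franklin-type and Beck-type generalizations the paper is modeled on.
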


Franklin obtained a further generalization of Euler's partition theorem, which recovers the result of Glaisher when $m=0$.

\begin{theorem}[Franklin] 
Given integers $k\ge1$ and $m\ge0$, the number of partitions of $n$ with $m$ distinct parts each occurring $k$ or more times equals the number of partitions of $n$ with exactly $m$ distinct parts divisible by $k$.
\end{theorem}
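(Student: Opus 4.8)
The plan is to prove Franklin's theorem by a two-variable generating function identity, marking the size $n$ by $q$ and the number $m$ of ``special'' parts by $t$, and then to indicate how the same computation can be read as a refinement of Glaisher's bijection.

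First I set up the two generating functions. On the high-multiplicity side a fixed part-size $j$ contributes multiplicities $0,1,\dots,k-1$ with weight $1$ and multiplicities $\ge k$ with an extra factor $t$, giving the local factor
\[
L_j(q,t)=\sum_{a=0}^{k-1}q^{ja}+t\sum_{a\ge k}q^{ja}=\frac{1-(1-t)q^{kj}}{1-q^j},
\]
so that $L(q,t)=\prod_{j\ge1}L_j(q,t)$ has $[t^mq^n]L$ equal to the number of partitions of $n$ with exactly $m$ distinct parts of multiplicity $\ge k$. On the divisibility side a part-size $j$ with $k\nmid j$ contributes $\frac1{1-q^j}$, while a part-size $j$ with $k\mid j$ contributes $1+t\frac{q^j}{1-q^j}=\frac{1-(1-t)q^j}{1-q^j}$; writing $j=k\ell$ in the divisible factors gives
\[
R(q,t)=\Big(\prod_{k\nmid j}\frac1{1-q^j}\Big)\prod_{\ell\ge1}\frac{1-(1-t)q^{k\ell}}{1-q^{k\ell}},
\]
whose coefficient $[t^mq^n]R$ counts partitions of $n$ with exactly $m$ distinct parts divisible by $k$.

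The heart of the argument is to show $L=R$. I would absorb $\prod_{k\nmid j}\frac1{1-q^j}\cdot\prod_{\ell\ge1}\frac1{1-q^{k\ell}}=\prod_{j\ge1}\frac1{1-q^j}$, so that
\[
R(q,t)=\prod_{j\ge1}\frac1{1-q^j}\cdot\prod_{\ell\ge1}\big(1-(1-t)q^{k\ell}\big)=\prod_{j\ge1}\frac{1-(1-t)q^{kj}}{1-q^j}=L(q,t).
\]
Comparing coefficients of $t^mq^n$ then yields the theorem. This regrouping is exactly the product manipulation underlying Glaisher's theorem; the only new feature is carrying $t$ through it, and the step I expect to need the most care is checking that $t$ decorates the correct factors on each side (multiplicity $\ge k$ on one side, divisibility by $k$ on the other), since that is what pins down the statistic $m$.

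For a bijective proof in the spirit of this paper I would instead refine Glaisher's bijection. Partitioning the positive integers into chains $\{j,jk,jk^2,\dots\}$ with $k\nmid j$, both the size and each statistic are additive over chains, so the problem reduces, within one chain, to a statement about generalized base-$k$ representations $V=\sum_{i\ge0}c_ik^i$ with $c_i\ge0$: the number of indices $i$ with $c_i\ge k$ must be equidistributed with the number of indices $i\ge1$ with $c_i\ge1$. I would construct this local bijection recursively on the lowest digit, writing $c_0=v_0+sk$ with $v_0$ the genuine base-$k$ digit and $s\ge0$, which is Glaisher's carrying procedure made to track the marked parts. The main obstacle is precisely the design and verification of this local bijection, namely showing it preserves $V$ while trading ``a digit $\ge k$'' for ``a nonzero higher digit''; the generating-function computation above guarantees such a bijection exists and serves as a check on it.
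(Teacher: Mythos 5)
Your generating-function proof is correct and complete; note, however, that the paper itself contains no proof of this statement to compare against --- Franklin's theorem is quoted in the introduction as classical background motivating the composition analogue (Theorem~\ref{thm:comp3}), with no argument supplied. On its own merits your computation checks out: the local factor $\frac{1-(1-t)q^{kj}}{1-q^j}$ correctly weights multiplicity $\ge k$ by $t$; the factor $\frac{1-(1-t)q^{k\ell}}{1-q^{k\ell}}$ correctly weights the presence of the part $k\ell$ by $t$; the merge $\prod_{k\nmid j}(1-q^j)^{-1}\prod_{\ell\ge1}(1-q^{k\ell})^{-1}=\prod_{j\ge1}(1-q^j)^{-1}$ is valid since every positive integer is either divisible by $k$ or not; and both sides then equal $\prod_{j\ge1}(1-q^j)^{-1}\cdot\prod_{j\ge1}\bigl(1-(1-t)q^{kj}\bigr)$, a legitimate identity in $\ZZ[t][[q]]$ because each factor is $1+O(q^j)$. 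Extracting the coefficient of $t^mq^n$ yields the theorem. Your two-variable marking is in fact the same device the paper uses for its own results: the series $A_k(x,y)$ of Proposition~\ref{prop:GF} marks size by $x$ and the number of exceptional parts by $y$, so your argument fits the paper's analytic methodology; the bijective refinement you sketch is essentially the route Yang~\cite{Yang} takes (Glaisher's bijection with the marked parts tracked), which the paper cites in Section~\ref{sec:Beck}.

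One caveat: your final paragraph is a sketch, not a proof. Decomposing into chains $\{j,jk,jk^2,\ldots\}$ with $k\nmid j$ and reducing to representations $V=\sum_i c_ik^i$ is sound (within a chain, ``part divisible by $k$'' becomes ``index $i\ge1$ with $c_i\ge1$,'' since $k\nmid j$), but you never construct or verify the local bijection trading a digit $c_i\ge k$ for a nonzero digit in position $i+1$ while preserving $V$ and exchanging the two statistics exactly; carrying can merge or create marked indices, and controlling this is precisely the delicate point in Yang's work. Since you present the bijection only as a complement and the generating-function argument stands alone, this gap does not affect the correctness of your proof of the theorem.
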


Recently, Beck made three conjectures on partitions with restricted parts in the On-Line Encyclopedia of Integer Sequence~\cite{OEIS}.
Andrews~\cite{Andrews} and Chern~\cite{Chern} proved the conjectures of Beck using generating functions, and Yang~\cite{Yang} proved these conjectures using Glaisher's bijection.
In general, it seems difficult to obtain closed formulas for the number of partitions, whether with or without part constraints; see, e.g., Sills~\cite{Sills}.

The main theme of this paper is to study analogues of the above theorems for compositions instead of partitions. 
Unlike in the case of partitions, we are able to obtain closed formulas for compositions with restricted parts, with both analytic and combinatorial proofs.
We also explore analogues of Beck's conjectures in the setting of compositions.
Generally speaking, partitions attract much more attention than compositions, but there have been some recent efforts on finding composition analogues of partition identities, such as the work of Munagi~\cite{Munagi}, Munagi--Sellers~\cite{MunagiSellers}, and Sills~\cite{Sills}.
It is certainly our hope that this paper, together with the above cited references, can bring more attention to the study of compositions.
In fact, after the first version of this paper was uploaded to the arXiv, we were aware via personal communication that it provided motivation to new work of Li and Wang~\cite{LiWang}, which includes a different bijective proof for one of our main results (Theorem~\ref{thm:comp3}) and more composition analogues of Beck's conjectures.

To summarize our new results, we first state a known composition analogue of Euler's partition theorem.

\begin{theorem}\label{thm:comp1}
The number of compositions of $n$ with odd parts equals the number of compositions of $n+1$ with parts greater than one.
\end{theorem}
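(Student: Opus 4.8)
The plan is to prove the stronger statement that both quantities equal the Fibonacci number $F_n$ (normalized by $F_1=F_2=1$), as promised in the abstract. Write $c(n)$ for the number of compositions of $n$ into odd parts and $d(n)$ for the number of compositions of $n$ into parts greater than one, so that the goal becomes the single identity $c(n)=d(n+1)$ for $n\ge1$. My first and quickest line of attack is generating functions, exploiting that a composition is an \emph{ordered} sequence of parts, so that each count is a sum of a geometric series over the allowed part set.

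Since each odd part contributes $x+x^3+x^5+\cdots=x/(1-x^2)$ and each part greater than one contributes $x^2+x^3+x^4+\cdots=x^2/(1-x)$, summing over all finite sequences of parts gives
\[
\sum_{n\ge0}c(n)\,x^n=\frac{1}{1-\dfrac{x}{1-x^2}}=\frac{1-x^2}{1-x-x^2}
\qand
\sum_{n\ge0}d(n)\,x^n=\frac{1}{1-\dfrac{x^2}{1-x}}=\frac{1-x}{1-x-x^2}.
\]
Discarding the constant term $1$ coming from the empty composition, I would verify that both $\sum_{n\ge1}c(n)x^n$ and $\sum_{n\ge1}d(n+1)x^n$ reduce to $x/(1-x-x^2)$, the classical Fibonacci generating function; comparing coefficients then gives $c(n)=F_n=d(n+1)$. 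The only subtlety here is the off-by-one shift between $n$ and $n+1$, which is precisely what the extra factor relating $1-x^2$ and $1-x$ records. The same conclusion can be reached purely combinatorially through matching recurrences: deleting a trailing $1$ or reducing a trailing part $\ge3$ by $2$ shows $c(n)=c(n-1)+c(n-2)$, while deleting a trailing $2$ or reducing a trailing part $\ge3$ by $1$ shows $d(n)=d(n-1)+d(n-2)$, and the initial values align to force $c(n)=d(n+1)$.

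Because the surrounding paper is built on extending Sills' bijection, the proof I would actually present is bijective and foreshadows that machinery. I would encode each side as a tiling of a $1\times(n-1)$ strip by unit squares and dominoes, which are counted by $F_n$. On the odd side, I would expand each odd part $2m+1$ as a single square followed by $m$ dominoes; the resulting tiling of length $n$ begins with a square, and deleting that leading square lands in the set of all length-$(n-1)$ tilings. On the other side, I would expand each part $p\ge2$ of a composition of $n+1$ as a domino followed by $p-2$ squares; the resulting tiling of length $n+1$ begins with a domino, and deleting that leading domino again lands in the length-$(n-1)$ tilings. Composing one map with the inverse of the other produces the desired bijection, and this square/domino encoding is exactly a binary-sequence encoding in Sills' style.

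The step I expect to be the main obstacle is verifying that each of these block expansions is genuinely a bijection, i.e. that the left-to-right grouping of a tiling into maximal ``square-then-dominoes'' blocks (respectively ``domino-then-squares'' blocks) is well defined and inverts the expansion. This hinges on the tiling starting with a square (respectively a domino), which is exactly the structure guaranteed after, respectively before, trimming the leading tile; reconciling the two different length reductions---one from a board of length $n$ and one from a board of length $n+1$, both down to $n-1$---is where the $n\mapsto n+1$ shift in the statement is accounted for, and it is the part that must be checked with care.
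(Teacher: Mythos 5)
Your proposal is correct, and all three of your arguments go through. It is worth spelling out how it sits relative to the paper: the paper never proves Theorem~\ref{thm:comp1} in isolation, but derives it as the $k=2$ case of Theorem~\ref{thm:comp2}, proved via Sills' binary-sequence surgery --- encode a composition of $n$ by its descent set as a $0/1$-string of length $n-1$, pass to the opposite string $\mathbf{c}$ (odd parts force the maximal runs of ones in $\mathbf{c}$ to have even length), replace each run $1^{2j}$ by $(01)^j$, and add $k-1=1$ to the last part. Your tiling bijection is this same map in different clothing: under the dictionary ``pair of consecutive ones $\leftrightarrow$ domino, zero $\leftrightarrow$ square,'' your expansion of an odd part $2m+1$ into a square followed by $m$ dominoes (after trimming the leading square) reproduces exactly the opposite string $\mathbf{c}$, and one can check (e.g.\ on $\alpha=(3,1,1)\models 5$ or $\alpha=(1,3,1)\models 5$) that composing your two expansions returns the very same composition of $n+1$ as the paper's map --- your prepended domino at the front accounts for the shift that the paper implements by adding $1$ to the last part. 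So the bijective core is essentially the paper's approach, but your tiling formulation buys something real: the intermediate object is the set of all square-and-domino tilings of a length-$(n-1)$ board, which makes the value $F_n$ visible at a glance and makes the well-definedness of the inverse (cut immediately before each square, resp.\ each domino) transparent, whereas the paper's run-replacement requires a separate argument that the inverse substitution $0^{k-1}1 \mapsto 1^k$ is legitimate. Your generating-function and recurrence proofs are genuinely independent routes that the paper does not give for this statement (it only cites the Fibonacci values; its analytic machinery in Propositions~\ref{prop:Ak} and~\ref{prop:GF} is reserved for the refined counts $a_{k,n}$ and $a^{(m)}_{k,n}$), and your computation of $\frac{1-x^2}{1-x-x^2}$ and $\frac{1-x}{1-x-x^2}$, both reducing to $x/(1-x-x^2)$ after discarding constants and shifting, is correct. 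One small caution: in the recurrence argument, the inverse maps ``add $2$ to the trailing part'' and ``add $1$ to the trailing part'' are undefined on the empty composition, so $c(n)=c(n-1)+c(n-2)$ should be invoked only for $n\ge 3$ (and similarly for $d$), with the initial values $c(1)=c(2)=1$ and $d(2)=d(3)=1$ checked directly --- which is exactly the alignment you allude to, but it deserves an explicit line.
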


Both numbers in Theorem~\ref{thm:comp1} are equal to the \emph{Fibonacci number} $F_n$ defined by $F_n:=F_{n-1}+F_{n-2}$ with $F_0=0$ and $F_1=1$; see, e.g., Cayley~\cite{Cayley}, Grimaldi~\cite{Grimaldi}, and Stanley~\cite[Exercise 1.35]{EC1}.
Recently, Sills~\cite{Sills} provided a bijective proof of Theorem~\ref{thm:comp1} using the binary sequence encoding of compositions.

One can also represent a composition as a \emph{zigzag graph} or equivalently, a \emph{ribbon diagram}; this is similar to the well-known Ferrers/Young diagram of a partition.
Using the zigzag graphs of compositions, Munagi~\cite[Theorem 1.2]{Munagi} generalized Theorem~\ref{thm:comp1} to the following result.

\begin{theorem}\label{thm:comp2}
For any integer $k\ge1$, the number of compositions of $n$ with parts congruent to $1$ modulo $k$ equals the number of compositions of $n+k-1$ with parts no less than $k$.
\end{theorem}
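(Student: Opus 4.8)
The plan is to prove the identity analytically via generating functions, reducing both counts to a single auxiliary sequence, and then to recast the result bijectively. First I would record the two enumerating series. A part congruent to $1$ modulo $k$ contributes $x + x^{k+1} + x^{2k+1} + \cdots = x/(1-x^k)$, so summing over all compositions (finite sequences of such parts) gives
\[
A(x) = \sum_{\ell \ge 0}\left(\frac{x}{1-x^k}\right)^{\ell} = \frac{1-x^k}{1-x-x^k}.
\]
Likewise a part no less than $k$ contributes $x^k + x^{k+1} + \cdots = x^k/(1-x)$, so the compositions into such parts are enumerated by
\[
B(x) = \sum_{\ell \ge 0}\left(\frac{x^k}{1-x}\right)^{\ell} = \frac{1-x}{1-x-x^k}.
\]
The left-hand count of the theorem is $[x^n]A(x)$ and the right-hand count is $[x^{n+k-1}]B(x)$, so the claim is precisely $[x^n]A(x) = [x^{n+k-1}]B(x)$.

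Second, I would introduce $D(x) := 1/(1-x-x^k) = \sum_{n\ge 0} d_n x^n$, whose coefficients satisfy $d_n = d_{n-1} + d_{n-k}$ (with $d_0 = 1$ and $d_n = 0$ for $n<0$), this being exactly the relation $(1-x-x^k)D(x) = 1$. Writing $A(x) = (1-x^k)D(x)$ and $B(x) = (1-x)D(x)$ reduces everything to $d_n$: indeed $[x^n]A(x) = d_n - d_{n-k}$, which equals $d_{n-1}$ by the recurrence, while $[x^m]B(x) = d_m - d_{m-1} = d_{m-k}$, again by the recurrence. Setting $m = n+k-1$ gives $[x^{n+k-1}]B(x) = d_{m-k} = d_{n-1}$ as well, so the two sides agree. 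This is the whole analytic argument; the only point demanding care is the index shift $m = n+k-1$ together with the convention for small and negative indices, so I expect essentially no obstacle here.

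Third, since $1/(1-x-x^k)$ also enumerates compositions into parts equal to $1$ or $k$, the common value $d_{n-1}$ points to a purely combinatorial proof in the spirit of Sills and Munagi, which I would develop in parallel. One map sends a composition $(c_1,\dots,c_\ell)$ of $n$ with each $c_i = j_i k + 1$ to the $\{1,k\}$-composition of $n-1$ obtained by replacing each $c_i$ with $i<\ell$ by $j_i$ copies of $k$ followed by a single $1$, and replacing the final $c_\ell$ by $j_\ell$ copies of $k$ (discarding its trailing $1$); here the $1$'s record the boundaries between consecutive parts. A dual map sends a composition of $n+k-1$ with all parts $\ge k$, say $c_i = k + r_i$, to the same target by emitting $r_1$ copies of $1$ for the first part and $k$ followed by $r_i$ copies of $1$ for each later part (deleting one leading $k$), so that each interior $k$ now marks where a part begins. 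Composing the first map with the inverse of the second yields a direct correspondence between the two original families. The main obstacle along this route is not the core idea but the verification that both maps are well defined and invertible at the extremes — the special handling of the last part in the first map and of the first part in the second, including degenerate cases such as $j_\ell = 0$ or an empty $\{1,k\}$-composition — which is exactly where the shift by $k-1$ is absorbed; I would check these edge cases explicitly to confirm bijectivity.
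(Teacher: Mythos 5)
Your proposal is correct, but it takes a genuinely different route from the paper, which proves Theorem~\ref{thm:comp2} purely bijectively via Sills' binary-sequence encoding: it passes to the opposite binary sequence (where parts $\equiv 1 \pmod k$ become runs of ones of length divisible by $k$), rewrites each maximal run $1^{jk}$ as $(0^{k-1}1)^j$, and adds $k-1$ to the last part. Your primary argument is instead analytic: writing both enumerating series over the common denominator $1-x-x^k$ and reducing both coefficients to $d_{n-1}$, where $1/(1-x-x^k)=\sum_{n\ge0} d_n x^n$, is an argument the paper never gives for this theorem (its generating-function work, Proposition~\ref{prop:Ak} and Section~\ref{sec:formula}, computes only the left-hand series $A_k(x)=(1-x^k)/(1-x-x^k)$, which your first display reproduces, and never touches the $\ge k$ side). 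Your computation is sound, including the index shift: the recurrence $d_n=d_{n-1}+d_{n-k}$ holds for $n\ge1$, and your identity correctly yields both sides equal to $d_{n-1}$ for all $n\ge1$ and $k\ge1$; this even pinpoints the implicit range of the theorem, since at $n=0$ with $k\ge2$ the literal statement fails ($1\ne0$). Your supplementary bijective route, factoring through compositions of $n-1$ into parts $1$ and $k$, is in substance the same correspondence as the paper's: your first map is exactly the bijection the paper sketches in its introduction (replace each part $\equiv1\pmod k$ by a string of $k$'s followed by a $1$, strike the final $1$), and composing it with the inverse of your second map reproduces the paper's bijection on its own example, sending $(1,7,1,4)\models 13$ to $(4,3,5,3)\models 15$ for $k=3$. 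What each approach buys: your $d_n$-recurrence proof is shorter and self-contained, while the paper's binary-sequence formulation is chosen because it extends directly to the refinement in Theorem~\ref{thm:comp3}, tracking the $m$ exceptional parts and the local conditions on their neighbors, which your coefficient identity does not see (the paper handles that refinement separately with a two-variable generating function in Proposition~\ref{prop:GF}). One caveat to record if you write up the combinatorial half: your $\{1,k\}$-composition maps degenerate at $k=1$, where the two letters coincide and your first map loses injectivity; that case is trivial (the statement becomes an identity) and is in any event covered by your analytic argument.
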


Theorem~\ref{thm:comp2} generalizes Theorem~\ref{thm:comp1} similarly as Glaisher's theorem generalizes Euler's partition theorem.
The two equal numbers in Theorem~\ref{thm:comp2} both appear in OEIS~\cite[A003269 for $k=4$]{OEIS}.
%k=1: A000079, k=2: A000045, k=3: A000930
The first number in Theorem~\ref{thm:comp2}, denoted by $a_{k,n}$, has a simple closed formula
\begin{equation}\label{eq:Dani}
a_{k,n} = \sum_{0\le j\le (n-1)/k} \binom{n-1-j(k-1)}{j}
\end{equation}
by Dani~\cite{Dani} and Munagi~\cite{Munagi}.
The generating function $A_k(x):=\sum_{n\ge0} a_{k,n} x^n$ can be derived from a more general result of Heubach and Mansour~\cite[Theorem~3.13]{HeubachMansour}.

For $k\ge2$, the number $a_{k,n}$ of compositions of $n$ with all parts congruent to $1$ modulo $k$ also equals the number of compositions of $n-1$ with all parts equal to $1$ or $k$.
One can prove this bijectively by replacing each part congruent to $1$ modulo $k$ with a string of $k$'s followed by a $1$ and striking out the last $1$.
See also Munagi~\cite[Theorem 1.2]{Munagi}.
These two equal numbers both appear in OEIS~\cite[A005710 for $k=8$]{OEIS}.
The latter number was studied by Chinn and Heubach~\cite{ChinnHeubach} and their result~\cite[Lemma 1]{ChinnHeubach} coincides with the generating function $A_k(x)$ upon a shift of terms.

We provide a proof for Theorem~\ref{thm:comp2} based on the bijective proof of Theorem~\ref{thm:comp1} by Sills~\cite{Sills}.
Although it gives the same bijection as the proof of Munagi~\cite[Theorem 1.2]{Munagi}, we can further extend it to establish the following result, which generalizes Theorem~\ref{thm:comp2} similarly as Franklin's theorem generalizes Glaisher's theorem.

\begin{theorem}\label{thm:comp3}
For any integers $k\ge1$ and $m\ge0$, the number of compositions of $n$ with exactly $m$ parts not congruent to $1$ modulo $k$, each of which is greater than $k$, equals the number of compositions of $n+k-1$ with exactly $m$ parts less than $k$, each of which is preceded by a part at least $k$ and followed by either the last part or a part greater than $k$.
\end{theorem}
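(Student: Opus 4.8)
The plan is to prove Theorem~\ref{thm:comp3} by enlarging the bijection behind Theorem~\ref{thm:comp2}. Write $\Phi$ for the map obtained from Sills' binary encoding, sending a composition of $n$ with all parts congruent to $1$ modulo $k$ to a composition of $n+k-1$ with all parts at least $k$; this is exactly the layer $m=0$ of what we want. First I would record $\Phi$ in a left-to-right recursive form, peeling one part at a time: a leading part $\equiv 1\pmod k$ of the form $ak+1$ either contributes $a$ copies of $k$ or lengthens the part currently being built in the image, which keeps the bookkeeping of the shift $n\mapsto n+k-1$ completely explicit. I would then attach to each side the statistic $m=$ (number of exceptional parts) and aim to extend $\Phi$ to a bijection $\Psi$ that preserves $m$, so that restricting to $m=0$ recovers Theorem~\ref{thm:comp2}.

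The heart of the construction is the treatment of the exceptional parts. On the source side an exceptional part has size $e>k$ with $e\equiv s+1\pmod k$ for a unique $s\in\{1,\dots,k-1\}$, so its residue ranges over the $k-1$ classes $\{2,\dots,k\}$ distinct from $1$. The idea is that $\Psi$ processes the divisible excess of $e$ just as $\Phi$ processes a regular part, but emits the residue as a single small part of size $s<k$, placed immediately after the image of the preceding $(\ge k)$-material and immediately before the next part exceeding $k$ or the final part. The map $s\mapsto s+1$ between small parts and exceptional residues is a bijection, the emitted small part carries the correct weight so that the total size still shifts by $k-1$, and exceptional parts and small parts are in one-to-one correspondence, giving $m\mapsto m$. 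The three pieces to verify are: $\Psi$ restricts to $\Phi$ when $m=0$; $\Psi$ is size- and $m$-preserving; and each emitted small part indeed satisfies the stated adjacency conditions.

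The main obstacle will be showing that the image of $\Psi$ is \emph{exactly} the target family, i.e. that the delicate local conditions characterize the small parts with no over- or under-counting. The clause ``followed by either the last part or a part greater than $k$'' must be read as: a small part is never last, and its successor is either the final part of the composition or a part $>k$ (so a successor equal to $k$ is permitted only when it is the last part). One checks, for instance, that $(k+1,1)$ must be excluded while $(2k,1)$ is allowed, and such boundary cases are exactly what make the condition subtle. I would settle surjectivity and injectivity by constructing the inverse explicitly (gluing each small part to the regular material on its left and absorbing it into a reconstructed exceptional part), and I would organize the argument by induction on $m$, peeling off one exceptional part/small part at a time and invoking Theorem~\ref{thm:comp2} at the base. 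As an independent confirmation I would run the generating-function computation: with
\[
L(x,y)=\frac{1-x^k}{\,1-x-x^k-y\,(x^{k+2}+x^{k+3}+\cdots+x^{2k})\,}
\]
the source family, a transfer-matrix (first-part) decomposition of the target family that respects precisely the reading of the adjacency clause above yields a generating function equal to $x^{k-1}L(x,y)$ up to a boundary polynomial of degree less than $k$, which is exactly the identity $\sum_m(\#\text{source of }n)\,y^m=\sum_m(\#\text{target of }n+k-1)\,y^m$ asserted by the theorem.
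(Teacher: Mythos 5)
Your plan is essentially the paper's own proof: the paper likewise extends Sills' encoding, replacing the maximal run of ones of length $\alpha_i-1$ in the opposite binary sequence by $(0^{k-1}1)^q\,0^{r-1}1$, where $r$ is the least positive residue of $\alpha_i-1$ modulo $k$ (your $s$), so that each exceptional part emits exactly one small part, the last part is increased by $k-1$, and the inverse is written down explicitly by gluing each small part back into the material on its left; your $L(x,y)$ is literally the paper's Proposition~\ref{prop:GF}.

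One concrete correction, and it sits exactly at the spot you single out as the main obstacle (characterizing the image): your boundary example is wrong. Under the theorem's wording --- and under your own restatement that ``a small part is never last'' --- the composition $(2k,1)$ is \emph{not} in the target family: its small part $1$ is the final part and hence is not followed by anything, so both $(k+1,1)$ and $(2k,1)$ fail the target condition. The distinction you draw is valid only on the source side, where $(k+1,1)$ is excluded because $k+1\equiv 1\pmod k$ forces $m=0$, while $(2k,1)$ has one exceptional part. Indeed, for $k=2$ and $n=4$ the unique source composition $(4)$ maps to $(2,1,2)\models 5$, not to $(4,1)$: small parts in the image can never be terminal, since the map adds $k-1$ to the last part (making it at least $k$), and a small part's successor, when it is not the last part, absorbs the separator zero left in $\mathbf{c}$ by the following part of $\alpha$ and is therefore strictly greater than $k$ --- this is precisely how the paper verifies the adjacency clause. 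If you carried $(2k,1)$ into your surjectivity argument as an allowed target, the two counts would already disagree at this first nontrivial instance; with that repaired, your explicit inverse is exactly part (ii) of the paper's proof, and the rest of your outline goes through as planned.
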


Let $a_{k,n}^{(m)}$ denote the number in Theorem~\ref{thm:comp3}.
For $k\ge3$ or $m\ge2$ we do not see the sequence $a_{k,n}^{(m)}$ in OEIS.
When $k=2$, $m=1$, and $n\ge1$, this sequence appears in OEIS~\cite[A029907]{OEIS} %whose initial terms are $0, 0, 0, 1, 2, 4, 8, 15, 28, 51, 92, 164, 290, 509, 888, \ldots$. 
with some interesting combinatorial interpretations, which are different from what we have in Theorem~\ref{thm:comp3}.
This sequence is also related to the composition analogues we obtain for Beck's conjectures in Section~\ref{sec:Beck} and one can find more details there.

We establish two closed formulas for the number $a_{k,n}^{(m)}$, which specialize to the formula~\eqref{eq:Dani} for the number appearing in Theorem~\ref{thm:comp2} when $m=0$.

\begin{theorem}\label{thm:formula}
For $m,n\ge0$ and $k\ge2$ we have
\begin{align*}
a_{k,n}^{(m)} 
&= \sum_{ \substack{ \lambda\subseteq (k-2)^m \\ i+(k+1)m+jk+ |\lambda| = n }} \binom{i}{m} \binom{i+j-1}{j} m_\lambda(1^m) \\
&= \sum_{i+(k+1)m+jk+\ell(k-1)+h=n} (-1)^\ell \binom{i}{m} \binom{i+j-1}{j} \binom{m}{\ell} \binom{m+h-1}{h}.
\end{align*}
\end{theorem}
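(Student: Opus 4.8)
The plan is to prove both identities via generating functions, working from the first description of $a_{k,n}^{(m)}$ in Theorem~\ref{thm:comp3}: such a composition is an arbitrary sequence built from two kinds of parts, namely \emph{small} parts congruent to $1$ modulo $k$ (with no constraint on how many occur) together with exactly $m$ \emph{large} parts that exceed $k$ and are not congruent to $1$ modulo $k$. First I would record the one-part generating functions
\[
A(x) = \sum_{j\ge0} x^{jk+1} = \frac{x}{1-x^k}, \qquad
B(x) = \frac{x^{k+1}}{1-x} - \frac{x^{k+1}}{1-x^k} = \frac{x^{k+2}\bigl(1+x+\cdots+x^{k-2}\bigr)}{1-x^k},
\]
where $A(x)$ enumerates one small part and $B(x)$ enumerates one large part (all parts exceeding $k$, minus those also congruent to $1$). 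Marking large parts with a variable $y$ and using the standard sequence construction for compositions, the objects in question are enumerated by
\[
F(x,y) = \frac{1}{1 - A(x) - y\,B(x)} = \sum_{m\ge0} y^m\,\frac{B(x)^m}{\bigl(1-A(x)\bigr)^{m+1}},
\]
so that $a_{k,n}^{(m)} = [x^n]\,B(x)^m\big/\bigl(1-A(x)\bigr)^{m+1}$.

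The next step is to simplify this coefficient series. Since $1-A(x) = (1-x-x^k)/(1-x^k)$, substituting the closed forms for $A$ and $B$ and cancelling powers of $1-x^k$ yields the single rational function
\[
\sum_{n\ge0} a_{k,n}^{(m)}\,x^n
= \frac{(1-x^k)\,x^{(k+2)m}\bigl(1+x+\cdots+x^{k-2}\bigr)^m}{(1-x-x^k)^{m+1}}.
\]
For $k=2$ the factor $1+x+\cdots+x^{k-2}$ is the empty product $1$, matching the conventions $\lambda\subseteq 0^m$ and $m_\lambda(1^m)=1$. Both formulas in Theorem~\ref{thm:formula} will be read off from this one series, so they are genuinely two different coefficient extractions of the same generating function.

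For the first formula I would peel off the factor $x^{(k+1)m}\bigl(1+x+\cdots+x^{k-2}\bigr)^m$ and expand what remains using the elementary identities $\sum_{i\ge0}\binom{i}{m}u^i = u^m/(1-u)^{m+1}$ and $\sum_{j\ge0}\binom{i+j-1}{j}t^j=(1-t)^{-i}$ with $u=x/(1-x^k)$ and $t=x^k$; this rewrites $x^m(1-x^k)/(1-x-x^k)^{m+1}$ as $\sum_{i,j}\binom{i}{m}\binom{i+j-1}{j}x^{i+jk}$. It then remains to interpret the coefficients of $\bigl(1+x+\cdots+x^{k-2}\bigr)^m$: grouping its monomials by the sorted multiset of the chosen exponents shows that $[x^{s}]$ collects exactly $m_\lambda(1^m)$ summed over $\lambda\subseteq (k-2)^m$ with $|\lambda|=s$, since $m_\lambda(1^m)$ counts the length-$m$ weak compositions with entries in $\{0,\dots,k-2\}$ whose nonzero parts form $\lambda$. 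Matching the four exponent contributions $i$, $(k+1)m$, $jk$, and $|\lambda|$ produces the first formula. For the second formula I would instead expand $\bigl(1+x+\cdots+x^{k-2}\bigr)^m = (1-x^{k-1})^m(1-x)^{-m}$ by the binomial theorem and the negative binomial series, giving the factors $(-1)^\ell\binom{m}{\ell}$ from $x^{\ell(k-1)}$ and $\binom{m+h-1}{h}$ from $x^h$, which combined with the same $\binom{i}{m}\binom{i+j-1}{j}$ yields the second formula.

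The only genuinely delicate point is getting $B(x)$ right and carrying out the cancellation that collapses $B(x)^m/(1-A(x))^{m+1}$ to the clean rational function above; once that identity is established, the two coefficient extractions are routine applications of standard binomial-series identities. As consistency checks I would verify that the $m=0$ case recovers formula~\eqref{eq:Dani} and that the small-$k$ boundary conventions (the empty product at $k=2$) behave as expected.
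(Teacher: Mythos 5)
Your proposal is correct and matches the paper's first (analytic) proof essentially step for step: you arrive at the same bivariate generating function $A_k(x,y)=\frac{1-x^k}{1-x-x^k-(x^{k+2}+x^{k+3}+\cdots+x^{2k})y}$ and perform the same two coefficient extractions, using the binomial-series identities (the paper's equations~\eqref{eq:NegPower} and~\eqref{eq:monomial}) for the first formula and the factorization $\left(1+x+\cdots+x^{k-2}\right)^m=\left(1-x^{k-1}\right)^m(1-x)^{-m}$ for the second. The only cosmetic difference is that you derive the generating function by the direct sequence construction $1/(1-A(x)-yB(x))$ rather than the paper's first-part recurrence (Proposition~\ref{prop:GF}), which is the same conditioning argument in different packaging; the paper also offers a second, purely combinatorial proof via binary sequences and inclusion-exclusion, which your proposal does not attempt but does not need.
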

Here $\lambda\subseteq(k-2)^m$ means that $\lambda$ is a partition with at most $m$ parts, each no more than $k-2$, $|\lambda|$ is the sum of all the parts of $\lambda$, and $m_\lambda(1^m)$ is the specialization of the monomial symmetric function indexed by $\lambda$ evaluated at the vector $(1,\ldots,1)$ of length $m$.

%\[ A_k(x,y) := \sum_{m,n\ge0} a^{(m)}_{k,n} x^n y^m. \]
The first formula in Theorem~\ref{thm:formula} is a positive summation.
The second formula is somewaht simpler, but carries negative signs.
We provide two proofs of Theorem~\ref{thm:formula}.
One is analytic, using the generating function technique, while the other is purely combinatorial, with the alternating signs in the second formula explained by inclusion-exclusion. 

The paper is structured as follows.
We first provide some preliminaries on partitions and compositions in Section~\ref{sec:prelim}.
Then we prove Theorem~\ref{thm:comp2} in Section~\ref{sec:comp2} using Sills' bijection, and generalize it to Theorem~\ref{thm:comp3} in Section~\ref{sec:comp3}.
We next show Theorem~\ref{thm:formula} both analytically and combinatorially in Section~\ref{sec:formula}.
Finally we summarize recent studies on three conjectures of Beck for partitions with restricted parts, and prove some composition analogues in Section~\ref{sec:Beck}.

\section{Preliminaries}\label{sec:prelim}

Given integers $a$ and $b$, we define the binomial coefficient
\[ \binom{a}{b} := 
\begin{cases}
\frac{a!}{b!(a-b)!}, & \text{if } a\ge b\ge 1, \\
1, & \text{if } b=0, \\
0, & \text{otherwise}.
\end{cases} \]
For any integer $d\ge1$, it is easy to show the following identity, which will be used in the analytic proof of Theorem~\ref{thm:formula}:
\begin{equation}\label{eq:NegPower}
\frac{1}{(1-x)^d} = \sum_{i=0}^\infty \binom{i+d-1}{i} x^i.
\end{equation}

A \emph{partition of $n$} is a weakly decreasing sequence $\lambda=(\lambda_1,\ldots,\lambda_\ell)$ of positive integers with sum $\lambda_1+\cdots+\lambda_\ell=n$; it is common to use the symbol $\lambda\vdash n$ to denote this.
The \emph{size} of $\lambda$ is $|\lambda|:=n$, the \emph{parts} of $\lambda$ are the integers $\lambda_1,\ldots,\lambda_\ell$, and the \emph{length} of $\lambda$ is the number of parts $\ell(\lambda):=\ell$.

Let $\lambda \subseteq r^d$ denote that $\lambda=(\lambda_1,\ldots,\lambda_\ell)$ is a partition with at most $d$ parts, each part no more than $r$, i.e., $\lambda_1\le r$ and $\ell\le d$.
For $i=0,1,\ldots,r$, let $m_i$ be the number of parts of the partition $\lambda\subseteq r^d$ that are equal to $i$.
Then $m_0+m_1+\cdots+m_r=\ell \le d$ and $m_1+2m_2+\cdots+rm_r=|\lambda|$.
The \emph{monomial symmetric function} $m_\lambda(x_1,\ldots,x_d)$ is the sum of the monomials $x_1^{a_1}\cdots x_d^{a_d}$ for all rearrangements $(a_1,\ldots,a_d)$ of $(\lambda_1,\ldots,\lambda_d)$, where $\lambda_{\ell+1}=\cdots=\lambda_d=0$.
The evaluation of $m_\lambda(x_1,\ldots,x_d)$ at the vector $(1,\ldots,1)$ of length $d$ is
\[ m_\lambda(1^d) = \binom{m}{m_0,\ldots,m_r} = \frac{m!}{m_0!\cdots m_r!}. \]
One sees that
\begin{align}
(1+x+x^2+\cdots+x^r)^d 
&= \sum_{n\ge0} \sum_{ \substack{ m_0+m_1+\cdots+m_r=d \\ m_1+2m_2+\cdots+rm_r=n } } \binom{m}{m_0,\ldots,m_r} x^n \\
&= \sum_{n\ge0} \sum_{ \substack{ \lambda\subseteq r^d \\ |\lambda|=n } } m_\lambda(1^d) x^n. \label{eq:monomial}
\end{align}
We will use the identity~\eqref{eq:monomial} in our analytic proof of Theorem~\ref{thm:formula}. 

Next, a \emph{composition of $n$} is a sequence $\alpha=(\alpha_1,\ldots,\alpha_\ell)$ of positive integers with $\alpha_1+\cdots+\alpha_\ell=n$; we use the symbol $\alpha\models n$ to denote this.
The \emph{parts} of $\alpha$ are $\alpha_1,\ldots,\alpha_\ell$, which are not necessarily decreasing.
The \emph{length} of $\alpha$ is the number of parts $\ell(\alpha):=\ell$.
We say that a part $\alpha_i$ is \emph{preceded by} the part $\alpha_{i-1}$ if $i>1$, and \emph{followed by} the part $\alpha_{i+1}$ if $i<\ell$.
The \emph{descent set} of the composition $\alpha$ is 
\[ D(\alpha) := \{\alpha_1,\alpha_1+\alpha_2,\ldots,\alpha_1+\cdots+\alpha_{\ell-1}\}.\]
The map $\alpha\mapsto D(\alpha)$ is a bijection from compositions of $n$ to subsets of $[n-1]:=\{1,2,\ldots n-1\}$.
Furthermore, a subset $S\subseteq [n-1]$ can be encoded by a binary sequence of length $n-1$ whose $i$th component is $1$ if $i\in S$ or $0$ otherwise.
Therefore we have a bijection between compositions of $n$ and binary sequences of length $n-1$.
For example, the composition $\alpha=(1,7,1,4)\models 13$ has descent set $D(\alpha)=\{1,8,9\}\subseteq[12]$ and corresponds to the binary sequence $100000011000$.

Finally, the \emph{opposite} of a binary sequence $\mathbf{b}$ is the equally long binary sequence whose  $i$th component is different from the $i$th component of $\mathbf{b}$ for all $i$.
For example, the opposite of $110001011$ is $001110100$.

\section{Proof of Theorem~\ref{thm:comp2} using Sills' bijection}\label{sec:comp2}
Munagi~\cite[Theorem 1.2]{Munagi} bijectively proved Theorem~\ref{thm:comp2} using the zigzag graphs of compositions.
Now we provide a proof based on the bijective proof of Theorem~\ref{thm:comp1} due to Sills~\cite{Sills}.
The bijection constructed in our proof agrees with the bijection given by the proof of Munagi~\cite[Theorem 1.2]{Munagi}, but we can further extend it to prove Theorem~\ref{thm:comp3} in Section~\ref{sec:comp3}.

\begingroup
\def\thetheorem{\ref{thm:comp2}}
\begin{theorem}
For any integer $k\ge1$, the number of compositions of $n$ with all parts congruent to $1$ modulo $k$ equals the number of compositions of $n+k-1$ with no parts less than $k$.
\end{theorem}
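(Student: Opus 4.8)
The plan is to realize both families as two-letter words and connect them through a common model, in the spirit of Sills' binary-sequence bijection. The case $k=1$ is trivial (both sides count all compositions of $n$), so assume $k\ge2$. The starting point is the encoding recalled in Section~\ref{sec:prelim}, under which a composition of $N$ corresponds to a binary word of length $N-1$, each part $s$ contributing a block $0^{s-1}1$ with the trailing $1$ omitted for the last part. Under this encoding the condition ``all parts $\equiv1\bmod k$'' becomes ``every maximal run of $0$'s has length divisible by $k$'', while ``all parts $\ge k$'' becomes ``every maximal run of $0$'s has length at least $k-1$''. These two run-length conditions look rather different and, crucially, the number of $1$'s, i.e.\ the number of parts, is not preserved by the correspondence we seek (already for $n=5$, $k=2$ the single-part $(5)$ should match the three-part $(2,2,2)$), so a direct block-by-block matching cannot work.

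Instead I would route both sides through compositions of $n-1$ into parts equal to $1$ or $k$, viewed as words over the two-letter alphabet $\{1,k\}$. For the left-hand side I would invoke the bijection already described in the introduction: replace each part $1+kj$ of a composition of $n$ by the string $\underbrace{k\cdots k}_{j}\,1$ and delete the final $1$, producing a $\{1,k\}$-composition of $n-1$, the inverse appending a $1$ and parsing into maximal blocks $k^{*}1$. For the right-hand side I would set up the analogous bijection: given a composition of $n+k-1$ with all parts $\ge k$, write each part as $k+r$ with $r\ge0$, replace it by $\underbrace{1\cdots1}_{r}\,k$, concatenate, and delete the final $k$; this yields a $\{1,k\}$-composition of $(n+k-1)-k=n-1$. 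The inverse appends a $k$ and parses the resulting word, which necessarily ends in $k$, into maximal blocks $1^{*}k$, each block $1^{r}k$ recovering a part $r+k\ge k$. Composing the first bijection with the inverse of the second gives the claimed equinumerosity, and I would record the correspondence on the example $n=5$, $k=2$ to confirm it.

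The main thing to get right is this second bijection, and in particular the boundary bookkeeping. I would need to check that ``append a $k$ and cut before every $k$'' is a well-defined parsing (this is exactly where the word ending in $k$ is used), that it genuinely inverts the deletion of the final symbol, and that the recovered part sizes land in precisely the range $\ge k$; the analogous facts for the first bijection are already granted. A secondary but conceptually important point is the length bookkeeping that forces the shift from $n$ to $n+k-1$: both constructions land in $\{1,k\}$-compositions of the \emph{same} integer $n-1$, which is what makes the two sides equal and explains the ``$+k-1$''. As an independent check I would verify the generating-function identity $[x^n]\frac{1-x^k}{1-x-x^k}=[x^{n+k-1}]\frac{1-x}{1-x-x^k}$ for $n\ge1$, which both of the counts confirm.
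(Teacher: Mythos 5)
Your proof is correct, but it takes a genuinely different route from the paper's. The paper proves Theorem~\ref{thm:comp2} by a single direct bijection in the style of Sills: encode $\alpha\models n$ as a binary sequence, pass to the opposite sequence (whose maximal runs of ones then have lengths divisible by $k$), replace each such run by an equally long string of copies of $0^{k-1}1$ as in~\eqref{eq:01}, decode to a composition of $n$ whose parts are all at least $k$ except possibly the last, and finally add $k-1$ to the last part. You instead factor the correspondence through a common intermediate set, the $\{1,k\}$-compositions of $n-1$: on one side the substitution $1+kj\mapsto k^j\,1$ followed by deletion of the final letter $1$ (exactly the auxiliary bijection the paper records in its introduction, after Munagi), and on the other side the mirror substitution $k+r\mapsto 1^r\,k$ followed by deletion of the final letter $k$. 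Your new right-hand bijection is sound: since $k\ge2$ the two letters are distinguishable, every $\{1,k\}$-word of sum $n-1$ with a $k$ appended factors uniquely into blocks $1^r k$, each recovering a part $r+k\ge k$, and the two final-letter deletions account transparently for the shift $n\mapsto n+k-1$, which the paper handles by the somewhat more opaque ``add $k-1$ to the last part.'' Indeed the composed map appears to agree extensionally with the paper's: on the paper's example with $k=3$, both send $(1,7,1,4)\models 13$ to $(4,3,5,3)\models 15$. What the paper's formulation buys, and your factorization does not obviously provide, is refinability: the run-replacement~\eqref{eq:01} localizes exactly where a part $\not\equiv 1\pmod k$ produces a small part, which is what allows the argument to be upgraded to the statistic-tracking generalization in Theorem~\ref{thm:comp3}; routing through $\{1,k\}$-words coarsens away that information. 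Your generating-function cross-check is also valid, since for $n\ge1$ both sides reduce to the coefficient of $x^{n-1}$ in $1/(1-x-x^k)$.
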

\addtocounter{theorem}{-1}
\endgroup

\begin{proof}
Let $\alpha=(\alpha_1,\ldots,\alpha_\ell)$ be a composition of $n$, which corresponds to a binary sequence $\mathbf{b}$ of length $n-1$.
Let $\mathbf{c}$ denote the opposite binary sequence of $\mathbf{b}$.
Assume $a_i\equiv 1\pmod k$ for all $i=1,2,\ldots,\ell$.
This implies that the zeros in $\mathbf{b}$ [or the ones in $\mathbf{c}$, resp.] appear in strings of length divisible by $k$.
Thus we can replace each maximal substring of ones in $\mathbf{c}$ with an equally long string of the following form:
\begin{equation}\label{eq:01}
\underbrace{0\cdots0}_{k-1} 1 \underbrace{0\cdots0}_{k-1} 1 \cdots \underbrace{0\cdots0}_{k-1} 1 
\end{equation}
The resulting binary sequence $\widetilde{\mathbf{c}}$ corresponds to the descent set of a composition $\widetilde{\alpha}$ of $n$ whose parts are all at least $k$ except the last one.
Adding $k-1$ to the last part of $\widetilde\alpha$ gives a composition of $n+k-1$ with no parts less than $k$.

Conversely, given a composition of $n+k-1$ with no parts less than $k$, one can subtract $k-1$ from the last part and get a composition of $n$, which corresponds to a binary sequence of length $n-1$.
Every one in this binary sequence is preceded by at least $k-1$ zeros, and thus replacing each substring of the form $0^{k-1}1$ with $1^k$ gives a binary sequence of length $n-1$ with ones appearing in strings of length divisible by $k$.
Then the opposite sequence has zeros appearing in strings of length divisible by $k$ and corresponds to a composition of $n$ with all parts congruent to $1$ modulo $k$.
\end{proof}

\begin{example}
The composition $\alpha=(1,7,1,4)\models 13$ has all parts congruent to $1$ modulo $k=3$.
It corresponds to the binary string $\mathbf{b} = 100000011000$, whose opposite is $\mathbf{c} = 011111100111$. 
Replacing each maximal substring of ones in $\mathbf{c}$ with an equally long string of the form~\eqref{eq:01} gives the binary sequence $\widetilde{\mathbf{c}} = 000100100001$, which corresponds to the composition $\widetilde\alpha =(4,3,5,1)\models 13$.
Adding $k-1$ to the last part of $\widetilde\alpha$ gives the composition $(4,3,5,3)\models 15$ with no part less than $k$.

Conversely, the composition $(4,3,5,3)\models 15$ has no part less than $k=3$.
Substracting $k-1$ from its last part gives the composition $(4,3,5,1)\models 13$, which corresponds to a binary sequence $000100100001$.
Replacing each copy of $001$ with $111$ in this binary sequence gives the binary sequence $011111100111$, in which ones occur in strings of length divisible by $k$.
The opposite binary sequence $100000011000$ corresponds to the composition $(1,7,1,4)\models 13$ with all parts congruent to $1$ modulo $k$.
\end{example}

For $n\ge0$ let $a_{k,n}$ denote the number of compositions of $n$ with all parts congruent to $1$ modulo $k$; %and let $b_n^k$ be the number of partitions of $n+k-1$ with no parts less than $k$.
in particular, $a_{k,0}=1$ since it is vacuously true that any part of the empty composition is congruent to $1$ modulo $k$.
Dani~\cite{Dani} and Munagi~\cite{Munagi} obtained a closed formula for the number $a_{k,n}$ from its generating function
\[ A_k(x) := \sum_{n\ge0} a_{k,n} x^n. \]
One can derive $A_k(x)$ from a more general result of Heubach and Mansour~\cite[Theorem~3.13]{HeubachMansour}.
%\begin{align*}
%A_k(x) = & \frac{1}{1-(x+x^{k+1}+x^{2k+1}+\cdots)} \\
%= & \frac{1}{1-\frac{x}{1-x^k}} = \frac{1-x^k}{1-x-x^k}
%\end{align*}
We give the formulas for $A_k(x)$ and $a_{k,n}$ in the following proposition and include a proof here to help the reader understand our proof of the more general Theorem~\ref{thm:formula} in Section~\ref{sec:formula}.

\begin{proposition}\label{prop:Ak}
For $n,k\ge1$ we have
\[ A_k(x) = \frac{1-x^k}{1-x-x^k} \qand
a_{k,n} = \sum_{0\le j\le (n-1)/k} \binom{n-1-j(k-1)}{j}. \]
%$n-1-j(k-1)\ge j$ implies $j\le (n-1)/k$
\end{proposition}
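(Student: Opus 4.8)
The plan is to first establish the generating function $A_k(x)$ and then extract the coefficient of $x^n$ using the identity~\eqref{eq:NegPower}. For the generating function, I would observe that a composition with all parts congruent to $1$ modulo $k$ is an ordered sequence of parts drawn from $\{1,k+1,2k+1,\ldots\}$, whose single-part generating function is $x+x^{k+1}+x^{2k+1}+\cdots = x/(1-x^k)$. Summing over all lengths $\ell\ge0$, where the $\ell=0$ term accounts for the empty composition counted by $a_{k,0}=1$, gives the geometric series
\[
A_k(x) = \sum_{\ell\ge0}\left(\frac{x}{1-x^k}\right)^\ell = \frac{1}{1-\frac{x}{1-x^k}} = \frac{1-x^k}{1-x-x^k},
\]
as claimed.

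To reach the closed formula, the key move is to rewrite
\[
A_k(x) = 1 + \frac{(1-x^k)-(1-x-x^k)}{1-x-x^k} = 1 + \frac{x}{1-x-x^k},
\]
so that for $n\ge1$ the number $a_{k,n}$ equals the coefficient of $x^{n-1}$ in $1/(1-x-x^k)$. I would then expand the denominator as a geometric series in $x^k/(1-x)$, namely
\[
\frac{1}{1-x-x^k} = \frac{1}{(1-x)\left(1-\frac{x^k}{1-x}\right)} = \sum_{j\ge0}\frac{x^{kj}}{(1-x)^{j+1}},
\]
apply~\eqref{eq:NegPower} with $d=j+1$ to each summand, and collect the coefficient of $x^{n-1}$. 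The summand indexed by $j$ contributes only when $kj\le n-1$, which yields
\[
a_{k,n} = \sum_{0\le j\le (n-1)/k}\binom{(n-1-kj)+j}{\,n-1-kj\,} = \sum_{0\le j\le (n-1)/k}\binom{n-1-j(k-1)}{j},
\]
where the last equality is the symmetry $\binom{a}{b}=\binom{a}{a-b}$.

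The computation is essentially routine, so I do not expect a serious obstacle; the only points needing care are the bookkeeping of the index range $0\le j\le(n-1)/k$ and the binomial symmetry in the final step. As a cross-check, and to make the formula transparent, I would note that it also admits a direct combinatorial proof: writing each part as $1+kc_i$ with $c_i\ge0$ and letting $\ell$ be the length, the size condition becomes $\ell+k\sum_i c_i = n$; fixing $j:=\sum_i c_i$ forces $\ell = n-kj$, and the number of ways to distribute $j$ among the $\ell$ parts is $\binom{j+\ell-1}{j}=\binom{n-1-j(k-1)}{j}$, with $j$ ranging over $0\le j\le(n-1)/k$ so that $\ell\ge1$. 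Summing over $j$ reproduces the formula. I would present the analytic version as the main argument, since it mirrors the generating-function technique to be used for the more general Theorem~\ref{thm:formula}.
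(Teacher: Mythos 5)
Your proof is correct, and its skeleton matches the paper's: both derive $A_k(x)=\frac{1-x^k}{1-x-x^k}$ by decomposing a composition into its parts (the paper phrases this as a recurrence on the first part, $a_{k,n}=\sum_{i\ge0}a_{k,n-ik-1}$, which yields the functional equation $A_k(x)=1+\frac{x}{1-x^k}A_k(x)$; your sequence-of-parts geometric series is the same computation in different clothing), and both then rewrite $A_k(x)=1+\frac{x}{1-x-x^k}$. Where you genuinely diverge is in the coefficient extraction: the paper expands $\frac{x}{1-x-x^k}=\sum_{i\ge0}x(x+x^k)^i$ and applies the binomial theorem to $(x+x^k)^i$, which produces $\binom{n-1-j(k-1)}{j}$ directly with no symmetry step, whereas you factor the denominator as $(1-x)\bigl(1-\frac{x^k}{1-x}\bigr)$, expand as $\sum_{j\ge0}x^{kj}(1-x)^{-(j+1)}$, apply \eqref{eq:NegPower}, and finish with $\binom{a}{b}=\binom{a}{a-b}$. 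That symmetry deserves the care you gave it, since the paper's truncated definition of $\binom{a}{b}$ does not satisfy symmetry unconditionally; in your range $n-1-kj\ge0$ it does hold, so the step is sound. Your route has the advantage of rehearsing exactly the $(1-x)^{-d}$ machinery of \eqref{eq:NegPower} that the paper later deploys in Proof 1 of Theorem~\ref{thm:formula}, at the cost of one extra manipulation; the paper's expansion in powers of $x+x^k$ is marginally shorter here. Your combinatorial cross-check (parts $1+kc_i$, length $\ell=n-kj$, weak compositions counted by $\binom{j+\ell-1}{j}$) does not appear in the paper's proof of this proposition, but it is essentially the $m=0$ case of the paper's combinatorial Proof 2 of Theorem~\ref{thm:formula}, so it is a fitting bonus rather than a redundancy.
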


\begin{proof}
Each composition of $n$ with all parts congruent to $1$ modulo $k$ must begin with a part of the form $ik+1$ for some integer $i\ge0$.
Removing the first part from this composition gives a composition of $n-ik-1$ with all parts congruent to $1$ modulo $k$. 
Thus we have 
\[ a_{k,n} = \sum_{i\ge0} a_{k,n-ik-1}, \qquad\forall n\ge1. \]
where $a_{k,n}:=0$ for $n<0$.
This recurrence relation implies that
\begin{align*}
A_k(x) 
%&=  1+\sum_{n\ge1} \sum_{i\ge0} a_{k,n-ik-1} x^n \\
%&= 1+ \sum_{i\ge0} \sum_{n\ge ik+1} a_{k,n-ik-1} x^n \\
&= 1+\sum_{i\ge0} x^{ik+1} A_k(x) 
= 1+\frac{x}{1-x^k} A_k(x).
\end{align*}
It follows that
\begin{align*}
A_k(x) %&= \frac{1-x^k}{1-x-x^k} \\
&=1+\frac{x}{1-x-x^k} \\
&=1+ \sum_{i\ge0} x(x+x^k)^i \\
&=1+ \sum_{i\ge0} x^{i+1} \sum_{0\le j\le i} \binom{i}{j} x^{j(k-1)} \\
&=1+ \sum_{j\ge0} \sum_{i\ge j} \binom{i}{j} x^{j(k-1)+i+1}.
\end{align*}
Taking the coefficient of $x^n$ in the above series gives the desired formula for $a_{k,n}$.
\end{proof}

\section{Proof of Theorem~\ref{thm:comp3}}\label{sec:comp3}

In this section we further generalize Theorem~\ref{thm:comp2} to Theorem~\ref{thm:comp3}, which is restated below. 
For an example of the bijection in our proof, see Example~\ref{example}.
\begingroup
\def\thetheorem{\ref{thm:comp3}}
\begin{theorem}
For any integers $k\ge1$ and $m\ge0$, the number of compositions of $n$ with exactly $m$ parts not congruent to $1$ modulo $k$, each of which is greater than $k$, equals the number of compositions of $n+k-1$ with exactly $m$ parts less than $k$, each of which is preceded by a part at least $k$ and followed by either the last part or a part greater than $k$.
\end{theorem}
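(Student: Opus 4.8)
The plan is to extend the bijection used in the proof of Theorem~\ref{thm:comp2}. Starting from a composition $\alpha=(\alpha_1,\ldots,\alpha_\ell)$ of $n$ on the left-hand side, I would form its binary sequence $\mathbf{b}$ of length $n-1$ and the opposite sequence $\mathbf{c}$. The first thing to record is the precise dictionary between parts and runs: reading $\mathbf{c}$ as $1^{\alpha_1-1}0\,1^{\alpha_2-1}0\cdots0\,1^{\alpha_\ell-1}$, each part $\alpha_i$ corresponds to a maximal run of ones of length $\alpha_i-1$, and consecutive runs are separated by single zeros. Under this dictionary a part congruent to $1\pmod k$ is exactly a run whose length is divisible by $k$, while a part that is greater than $k$ and not congruent to $1\pmod k$ is exactly a run whose length is at least $k+1$ and not divisible by $k$; the forbidden sizes $2,\ldots,k$ would correspond to runs of length $1,\ldots,k-1$, which the hypothesis on the left-hand side rules out.

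Next I would define the replacement on runs. Write a run length as $L=qk+r$ with $0\le r\le k-1$. A \emph{good} run ($r=0$) is replaced, as in Theorem~\ref{thm:comp2}, by $q$ copies of $0^{k-1}1$; a \emph{bad} run ($1\le r\le k-1$, and necessarily $q\ge1$ since then $L\ge k+1$) is replaced by $q$ copies of $0^{k-1}1$ followed by one short block $0^{r-1}1$. Keeping the separating zeros fixed produces a sequence $\widetilde{\mathbf{c}}$, the descent sequence of a composition $\widetilde\alpha$ of $n$, and adding $k-1$ to the last part of $\widetilde\alpha$ yields a composition of $n+k-1$. The crux of the forward direction is a local analysis of the parts created near each short block: the block $0^{r-1}1$ contributes a part of size $r<k$; the standard block $0^{k-1}1$ immediately before it contributes the preceding part, of size $k$ (or $k+1$ at a block boundary), so the small part is preceded by a part at least $k$; and because of the single separating zero, the part following the short block has size at least $k+1$, hence is greater than $k$. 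The one exception is when the bad run is the last run, so that the final one of the short block sits at the very end of $\widetilde{\mathbf{c}}$: then the small part is second-to-last, followed by an artificial last part of size $1$, which the operation $+\,(k-1)$ turns into a part of size $k$. This is exactly the clause ``followed by either the last part or a part greater than $k$,'' and it also explains the asymmetry between ``at least $k$'' (the preceding part may equal $k$) and ``greater than $k$'' (the following part, inflated by the separator). Since good runs and the length-one correction produce only parts of size at least $k$, the small parts of the final composition are in bijection with the bad parts, giving exactly $m$ of them.

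For the inverse I would reverse each step: subtract $k-1$ from the last part, pass to the binary sequence, replace every standard block $0^{k-1}1$ by $1^k$ and every short block $0^{r-1}1$ coming from a marked small part by $1^r$, reassemble each maximal group of such blocks into a single run $1^L$ separated by the retained zeros, and finally take the opposite sequence. The point to verify is that the two conditions on the right-hand side, preceded by a part at least $k$ and followed by the last part or a part greater than $k$, are exactly what guarantees that every small part can be unambiguously attached to the run of standard blocks on its left (this is the condition $q\ge1$) and that the boundary between consecutive reconstructed runs is correctly located (the separating zero, detected either as the following part exceeding $k$ or as the end of the sequence).

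I expect the main obstacle to be precisely this last point: showing that the stated right-hand side conditions characterize the image of the forward map, so that the inverse is well defined. The remaining work is careful bookkeeping of the boundary cases, namely runs arising from parts equal to $1$ (empty blocks), the treatment of the last part, and the tight distinction between ``at least $k$'' and ``greater than $k$,'' together with the verification that the number of small parts equals $m$ on the nose.
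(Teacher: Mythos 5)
Your proposal is correct and takes essentially the same route as the paper's proof: the same opposite-binary-sequence encoding, the same replacement of each maximal run $1^{qk+r}$ by $q$ blocks $0^{k-1}1$ followed by a short block $0^{r-1}1$ as in~\eqref{eq:001}, the same addition of $k-1$ to the last part, and the same block-decoding inverse on compositions of $n+k-1$. The boundary checks you flag (empty runs from parts equal to $1$, the treatment of the last part, and the distinction between ``at least $k$'' and ``greater than $k$'') are precisely the verifications the paper performs directly in both directions, so your plan would go through as written.
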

\addtocounter{theorem}{-1}
\endgroup

\begin{proof}
(i) Let $\alpha=(\alpha_1,\ldots,\alpha_\ell)$ be a composition of $n$, which corresponds to a binary sequence $\mathbf{b}$ of length $n-1$.
Let $\mathbf{c}$ be the opposite of $\mathbf{b}$.
One sees that every part $\alpha_i>1$ of $\alpha$ corresponds to a maximal string of ones in $\mathbf{c}$ whose length is $\alpha_i-1$, and we replace this maximal string of ones with an equally long string of the form 
\begin{equation}\label{eq:001}
\underbrace{0\cdots0}_{k-1} 1 \underbrace{0\cdots0}_{k-1} 1 \cdots \underbrace{0\cdots0}_{k-1} 1  \underbrace{0\cdots0}_{r-1} 1 
\end{equation}
where $r$ is the least positive residue of $\alpha_i-1$ modulo $k$.
The resulting binary sequence $\widetilde{\mathbf{c}}$ corresponds to a composition $\widetilde\alpha\models n$.

Assume that $\alpha$ has exactly $m$ parts not congruent to $1$ modulo $k$, all of which are greater than $k$.
If $\alpha_i\equiv1\pmod k$ then the parts of $\widetilde\alpha$ coming from the above string~\eqref{eq:001} are all at least $k$.
If $\alpha_i\not\equiv1\pmod k$ then the above string~\eqref{eq:001} gives exactly one part of $\widetilde\alpha$ that is less than $k$.
This part is preceded by a part that is at least $k$ if $\alpha_i>k$.
In addition, this part is followed by either the last part of $\widetilde\alpha$ or a part greater than $k$, since there is a $0$ right after the string of ones corresponding to $\alpha_i$ in the binary sequence $\mathbf{c}$ unless $i=\ell$.

The only part that we have not considered yet is the last part of $\widetilde\alpha$, which is possibly less than $k$. 
But adding $k-1$ to it gives a composition of $n+k-1$ whose last part now is at least $k$.
This composition has exactly $m$ parts less than $k$, each of which is preceded by a part at least $k$ and followed by either the last part or a part greater than $k$.

\vskip5pt\noindent
(ii) Conversely, let $\beta=(\beta_1,\ldots,\beta_\ell)$ be a composition of $n+k-1$, which corresponds to the binary sequence
\[ \left( 0^{\beta_1-1}1 0^{\beta_2-1}1 \cdots 0^{\beta_{\ell-1}-1}1 0^{\beta_\ell-1} \right).\]
Assume that $\beta$ has exactly $m$ parts less than $k$, each of which is preceded by a part at least $k$ and followed by either the last part or a part greater than $k$.
This implies $\beta_\ell\ge k$, so we can replace $0^{\beta_\ell-1}$ with $0^{\beta_\ell-k}$.
For each $i\in[\ell-1]$ we replace the string $0^{\beta_i-1}1$ with $0^{\beta_i-k} 1^k$ if $\beta_i\ge k$ or $1^{\beta_i}$ otherwise.
The opposite of the resulting binary sequence can be written as $\mathbf{b}=b_1 \cdots b_\ell$ where 
\[ b_i := 
\begin{cases}
1^{\beta_i-k}0^{k}, & \text{if } \beta_i\ge k, \\
0^{\beta_i}, & \text{if } \beta_i<k
\end{cases} \]
for all $i\in[\ell-1]$ and $b_\ell:=1^{\beta_\ell-k}$.
The binary sequence $\mathbf{b}$ corresponds to a composition $\beta' \models n$. 

There are exactly $m$ parts of $\beta'$ that are not congruent to $1$ modulo $k$, since they all come from the parts of $\beta$ less than $k$.
Recall that each part $\beta_i<k$ of $\beta$ is preceded by a part at least $k$ and followed by either the last part or a part greater than $k$.
Hence $b_i=0^{\beta_i}$ is preceded by a maximal string of zeros whose length is a positive multiple of $k$, and followed by either nothing or a $1$.
This gives a part of $\beta'$ not congruent to $1$ modulo $k$, and it must be greater than $k$.
\end{proof}

\begin{example}\label{example}
(i) The composition $\alpha=(5,4,6,1)\models 16$ has $m=2$ parts not congruent to $1$ modulo $k=3$, each greater than $k$.
It corresponds to the binary sequence $\mathbf{b}=000010001000001$ whose opposite is $\mathbf{c} = 111101110111110$.
Replacing each maximal substring of ones with a string of the form~\eqref{eq:001} gives the binary sequence $\widetilde{\mathbf{c}} = 001100010001010$, which corresponds to the composition $\widetilde{\alpha} = (3,1,4,4,2,2)\models 16$.
The part $5\not\equiv1\pmod k$ of $\alpha$ corresponds to the part $1<k$ in $\widetilde\alpha$, which is preceded by a $3\ge k$ and followed by a $4>k$.
The part $6\not\equiv1\pmod k$ of $\alpha$ corresponds to the first occurrence of $2$ in $\widetilde\alpha$, which is preceded by a $4\ge k$ and followed by the last part of $\widetilde\alpha$.
Adding $k-1$ to the last part of $\widetilde\alpha$ gives the composition $(3,1,4,4,2,4)\models 18$ with exactly $m$ parts less than $k$ as mentioned above since its last part is now $4\ge k$.

\vskip5pt\noindent
(ii) Conversely, the composition $\beta=(3,1,4,4,2,4)\models 18$ has $m=2$ parts less than $k=3$, which are $1$ and $2$, each preceded by a part at least $k$ and followed by either the last part or a part greater than $k$.
The composition $\beta$ corresponds to the binary sequence $00110001000101000$.
By the construction in the above proof, we have $\mathbf{b} = 000010001000001$, which corresponds to the composition $\beta'=(5,4,6,1)\models 16$.
There are exactly $m$ parts of $\beta'$ not congruent to $1$ modulo $k$, which are $5>k$ and $6>k$, coming from the two parts $1<k$ and $2<k$ of $\beta$.
\end{example}

\begin{remark}
After the first version of this paper was submitted to the arXiv, Li and Wang~\cite{LiWang} quickly found a simple bijective proof for Theorem~\ref{thm:comp3}, which is different from our proof.
\end{remark}

\begin{remark}
From computations we cannot find any connection between the number of compositions of $n$ with exactly $m$ parts not congruent to $1$ modulo $k$ and the number of compositions of $n+k-1$ with exactly $m$ parts less than $k$.
We give an example below to illustrate how our proof of Theorem~\ref{thm:comp3} would fail in this situation.
The composition $\alpha=(4,3,4)\models 11$ has exactly $m=1$ part not congruent to $1$ modulo $k=3$.
It corresponds to the binary sequence $\mathbf{b}=0001001000$ whose opposite is $\mathbf{c}=1110110111$.
Replacing each maximal string of ones with an equally long string of the form~\eqref{eq:001} gives $\widetilde{\mathbf{c}} = 0010010001$, which corresponds to the composition $\widetilde\alpha=(3,3,4,1)\models 11$.
Adding $k-1$ to the last part of $\widetilde\alpha$ gives a composition $(3,3,4,3)\models 13$ with no part less than $k$. 
\end{remark}

\section{Proof of Theorem~\ref{thm:formula}}\label{sec:formula}

In this section we establish Theorem~\ref{thm:formula}, which gives two closed formulas for the number appearing in Theorem~\ref{thm:comp3}, that is, the number $a^{(m)}_{k,n}$ of partitions of $n$ with exactly $m$ parts not congruent to $1$ modulo $k$, each greater than $k$ for $m,n\ge0$ and $k\ge2$.
In particular, we have $a^{(0)}_{k,0}=1$ and $a^{(m)}_{k,0}=0$ for all $m\ge1$.
To obtain a closed formula for the number $a^{(m)}_{k,n}$, we first derive a formula for its generating function
\[ A_k(x,y) := \sum_{m,n\ge0} a^{(m)}_{k,n} x^n y^m \]
whose specialization $A_k(x,0)=A_k(x)$ is already determined by Proposition~\ref{prop:Ak}.

\begin{proposition}\label{prop:GF}
For $k\ge2$ we have
\[ A_k(x,y) = \frac{1-x^k}{1-x-x^k-(x^{k+2}+x^{k+3}+\cdots+x^{2k})y}. \]
\end{proposition}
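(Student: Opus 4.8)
The plan is to mimic the first-part-removal argument used in the proof of Proposition~\ref{prop:Ak}, but now refined to keep track of the statistic $m$ through the variable $y$. A composition counted by $a_{k,n}^{(m)}$ is an ordered sequence of parts drawn from two disjoint families: the parts congruent to $1$ modulo $k$, namely $1, k+1, 2k+1, \ldots$, which do not contribute to $m$; and the parts that are greater than $k$ and not congruent to $1$ modulo $k$, each of which contributes exactly $1$ to $m$ and hence should carry a factor of $y$. Every other part, that is, each of $2, 3, \ldots, k$, is forbidden, since a part not congruent to $1$ modulo $k$ must be greater than $k$. First I would write down the generating function $g(x,y)$ for a single allowed part, split according to these two families.

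For the first family the contribution is $\sum_{i\ge0} x^{ik+1} = x/(1-x^k)$, exactly as in Proposition~\ref{prop:Ak}. The crucial computation, which I expect to be the main obstacle, is the generating function for the second family, since this is the one place where the specific numerator polynomial in the claimed formula is produced. Grouping the admissible parts $p>k$ with $p\not\equiv1\pmod{k}$ into blocks $\{jk+2, jk+3, \ldots, jk+k\}$ for $j\ge1$ gives
\[ \sum_{\substack{p>k \\ p\not\equiv1\pmod{k}}} x^p = \sum_{j\ge1} x^{jk}\left(x^2 + x^3 + \cdots + x^k\right) = \frac{x^{k+2}+x^{k+3}+\cdots+x^{2k}}{1-x^k}, \]
so that $g(x,y) = \dfrac{x}{1-x^k} + y\,\dfrac{x^{k+2}+\cdots+x^{2k}}{1-x^k}$.

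With $g(x,y)$ in hand, I would argue that a composition counted by $A_k(x,y)$ is an arbitrary finite (possibly empty) sequence of such parts, so that the usual sequence construction gives $A_k(x,y) = 1/(1-g(x,y))$; equivalently, removing the first part yields the recurrence $A_k(x,y) = 1 + g(x,y)\,A_k(x,y)$, where the $1$ records the empty composition, consistent with the boundary values $a_{k,0}^{(0)}=1$ and $a_{k,0}^{(m)}=0$ for $m\ge1$. Solving for $A_k(x,y)$ and clearing the common denominator $1-x^k$ then gives
\[ A_k(x,y) = \frac{1-x^k}{(1-x^k) - x - y(x^{k+2}+\cdots+x^{2k})} = \frac{1-x^k}{1-x-x^k-(x^{k+2}+x^{k+3}+\cdots+x^{2k})y}, \]
as claimed. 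Setting $y=0$ recovers $A_k(x)$ from Proposition~\ref{prop:Ak}, which serves as a useful consistency check.
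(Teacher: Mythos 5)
Your proposal is correct and takes essentially the same approach as the paper: both arguments delete the first part of the composition, classify it by its residue modulo $k$ (the family $\equiv 1 \pmod k$ giving $x/(1-x^k)$ and the family of parts $>k$, $\not\equiv 1 \pmod k$ giving $y(x^{k+2}+\cdots+x^{2k})/(1-x^k)$), and solve the resulting linear relation for $A_k(x,y)$. The only difference is presentational: by weighting parts with $y$ and applying the sequence construction $A_k(x,y)=1/(1-g(x,y))$ uniformly in $m$, you sidestep the paper's separate treatment of the $m\ge1$ recurrence and its invocation of the identity $A_k(x)\left(1-\frac{x}{1-x^k}\right)=1$ from Proposition~\ref{prop:Ak}.
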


\begin{proof}
For $n,m\ge1$, let $\alpha$ be a composition of $n$ with exactly $m$ parts not congruent to $1$ modulo $k$, each greater than $k$.
The first part of $\alpha$ can be written as $ik+j$, where either $i\ge0$ and $j=1$ or $i\ge1$ and $j\in\{2,\ldots,k\}$.
Removing this part from $\alpha$ gives a composition $\alpha'$ of $n-ik-j$, which is counted by either $a^{(m)}_{k,n-ik-j}$ if $j=1$ or $a^{(m-1)}_{k,n-ik-j}$ if $j\ne1$.
Thus we have a recurrence relation
\[ a^{(m)}_{k,n} = \sum_{i\ge0} a^{(m)}_{k,n-ik-1} + \sum_{2\le j\le k} \sum_{i\ge1} a^{(m-1)}_{k,n-ik-j} \quad \text{for all } m,n\ge1. \]
where we set $a^{(m)}_{k,n}:=0$ for $n<0$.
This implies
\begin{align*}
A_k(x,y) &= A_k(x) + \sum_{n\ge1}\sum_{m\ge1} \left( \sum_{i\ge0} a^{(m)}_{k,n-ik-1} + \sum_{2\le j\le k} \sum_{i\ge1} a^{(m-1)}_{k,n-ik-j} \right) x^n y^m \\
&= A_k(x)+ \sum_{i\ge0}\sum_{n\ge ik+1}\sum_{m\ge1}  a^{(m)}_{k,n-ik-1} x^n y^m \\
& \qquad \qquad + \sum_{2\le j\le k} \sum_{i\ge1} \sum_{n\ge ik+j} \sum_{m\ge1} a^{(m-1)}_{k,n-ik-j} x^n y^m   \\
&= A_k(x)+\sum_{i\ge0} x^{ik+1} \left(A_k(x,y)-A_k(x)\right) + \sum_{2\le j\le k} \sum_{i\ge1} x^{ik+j} y A_k(x,y) \\
&= A_k(x) \left( 1-\frac{x}{1-x^k} \right) \\
& \qquad \qquad + A_k(x,y) \left( \frac{x}{1-x^k} + \frac{(x^{k+2}+x^{k+3}+\cdots+x^{2k})y}{1-x^k}  \right).
\end{align*}
From this and the formula for $A_k(x)$ given in Proposition~\ref{prop:Ak}, we derive
\begin{align*}
& A_k(x,y) \left(1-\frac{x}{1-x^k}- \frac{(x^{k+2}+x^{k+3}+\cdots+x^{2k})y}{1-x^k} \right) \\
= &A_k(x) \left(1-\frac{x}{1-x^k} \right) = 1.
\end{align*}
The result follows immediately.
\end{proof}

Now we are ready to prove Theorem~\ref{thm:formula} in two different ways.

\begingroup
\def\thetheorem{\ref{thm:formula}}
\begin{theorem}
For $m,n\ge0$ and $k\ge2$ we have
\begin{align*}
a_{k,n}^{(m)} 
&= \sum_{ \substack{ \lambda\subseteq (k-2)^m \\ i+(k+1)m+jk+ |\lambda| = n }} \binom{i}{m} \binom{i+j-1}{j} m_\lambda(1^m) \\
&= \sum_{i+(k+1)m+jk+\ell(k-1)+h=n} (-1)^\ell \binom{i}{m} \binom{i+j-1}{j} \binom{m}{\ell} \binom{m+h-1}{h}.
\end{align*}
\end{theorem}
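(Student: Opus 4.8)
The plan is to give two proofs. For the analytic one, I would start from the generating function of Proposition~\ref{prop:GF} and extract the coefficient of $y^m$ by a geometric-series expansion in $y$. Writing the denominator of $A_k(x,y)$ as $(1-x-x^k)\bigl(1-\tfrac{(x^{k+2}+\cdots+x^{2k})y}{1-x-x^k}\bigr)$ and recalling $A_k(x)=\tfrac{1-x^k}{1-x-x^k}$ from Proposition~\ref{prop:Ak}, I would obtain
\[ [y^m]\,A_k(x,y) = \frac{(1-x^k)\,(x^{k+2}+x^{k+3}+\cdots+x^{2k})^m}{(1-x-x^k)^{m+1}}. \]
The crucial manipulation is the factorization $1-x-x^k=(1-x^k)\bigl(1-\tfrac{x}{1-x^k}\bigr)$, which lets me rewrite the denominator factor as $\tfrac{1-x^k}{(1-x-x^k)^{m+1}}=\tfrac{1}{(1-x^k)^{m}}\bigl(1-\tfrac{x}{1-x^k}\bigr)^{-(m+1)}$. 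Applying \eqref{eq:NegPower} twice and then reindexing the summation variable by $i\mapsto i+m$ (so that $\binom{i+m}{m}$ becomes $\binom{i}{m}$, with the convention $\binom{i}{m}=0$ for $i<m$ allowing $i$ to range over all nonnegative integers) should produce the building block $\sum_{i,j\ge0}\binom{i}{m}\binom{i+j-1}{j}x^{i-m+jk}$.

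It then remains to expand the factor $(x^{k+2}+\cdots+x^{2k})^m = x^{(k+2)m}(1+x+\cdots+x^{k-2})^m$. For the first formula I would apply identity~\eqref{eq:monomial} with $r=k-2$ and $d=m$ to write $(1+x+\cdots+x^{k-2})^m=\sum_{\lambda\subseteq(k-2)^m} m_\lambda(1^m)\,x^{|\lambda|}$; multiplying the three series and collecting the exponent as $i+(k+1)m+jk+|\lambda|$ gives exactly the monomial sum. For the second formula I would instead write $1+x+\cdots+x^{k-2}=\tfrac{1-x^{k-1}}{1-x}$, expand $(1-x^{k-1})^m$ by the binomial theorem and $(1-x)^{-m}$ by \eqref{eq:NegPower}, which replaces the contribution $x^{|\lambda|}$ by $\sum_{\ell,h}(-1)^\ell\binom{m}{\ell}\binom{m+h-1}{h}x^{\ell(k-1)+h}$ and yields the signed formula.

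For the combinatorial proof I would decompose a composition counted by $a^{(m)}_{k,n}$ into its \emph{good} parts (those $\equiv 1 \pmod k$) and its $m$ \emph{special} parts (those $>k$ and $\not\equiv1\pmod k$). Writing each good part as $kc_t+1$ and each special part uniquely as $(k+2)+\lambda_s+kg_s$ with $\lambda_s\in\{0,\ldots,k-2\}$ and $g_s\ge0$, I would identify $i$ with the total number of parts, so that $\binom{i}{m}$ counts the placements of the $m$ special parts among the $i$ positions, $\binom{i+j-1}{j}$ counts the distributions (by stars and bars) of the $j$ total extra copies of $k$ among all $i$ parts, and $m_\lambda(1^m)$ counts the assignments of the offsets $\lambda_s$ with prescribed multiset type $\lambda$; tracking the size gives $n=i+(k+1)m+jk+|\lambda|$. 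The second formula then follows by computing the number of offset assignments with each $\lambda_s\le k-2$ via inclusion--exclusion on the events $\lambda_s\ge k-1$, which is precisely what the alternating factor $(-1)^\ell\binom{m}{\ell}$ records. I expect the main obstacle to be the careful exponent bookkeeping in the analytic proof, in particular producing $\binom{i}{m}$ rather than $\binom{i+m}{m}$ through the reindexing and confirming that the base shift is $(k+1)m$ (not $(k+2)m$) after absorbing the $-m$ coming from this reindexing; verifying that the two interpretations of the offset sum agree is the point where the analytic and combinatorial arguments must be reconciled.
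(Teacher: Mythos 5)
Your proposal is correct and takes essentially the same route as the paper on both fronts: analytically, extracting $[y^m]$ first and then using the factorization $1-x-x^k=(1-x^k)\bigl(1-\tfrac{x}{1-x^k}\bigr)$ reproduces, merely in a different order of expansion, the paper's intermediate form $\sum_{m\ge0}\sum_{i\ge m}\binom{i}{m}(1-x^k)^{-i}x^{i+(k+1)m}(1+x+\cdots+x^{k-2})^m y^m$, after which both arguments apply \eqref{eq:NegPower}, \eqref{eq:monomial}, and the binomial theorem identically. Combinatorially, your part-level decomposition (good parts $kc_t+1$, special parts $k+2+\lambda_s+kg_s$ with $\lambda_s\in\{0,\ldots,k-2\}$) is exactly the paper's decomposition of the zero-runs $\alpha_r=k+1+a_rk+b_r$ in the binary-sequence encoding, shifted by one, with the same counts $\binom{i}{m}$, $\binom{i+j-1}{j}$, $m_\lambda(1^m)$ and the same inclusion--exclusion yielding the signed formula.
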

\addtocounter{theorem}{-1}
\endgroup

\begin{proof}[Proof 1]
We first present the generating function proof.
By Proposition~\ref{prop:GF}, we have 
\begin{align*}
A_k(x,y) &= \frac{1-x^k}{1-x^k-\left( x+(x^{k+2}+x^{k+3}+\cdots+x^{2k})y \right) } \\
&= \frac{1}{1-\left(1-x^k \right)^{-1} \left( x+(x^{k+2}+x^{k+3}+\cdots+x^{2k})y \right) } \\
&= \sum_{i\ge0} \left(1-x^k \right)^{-i} \left(x+( x^{k+2}+x^{k+3}+\cdots+x^{2k} ) y\right)^i \\
&= \sum_{m\ge0} \sum_{i\ge m} \binom{i}{m} \left(1-x^k \right)^{-i} x^{i-m} \left( x^{k+2}+x^{k+3}+\cdots+x^{2k} \right)^m y^m \\
&= \sum_{m\ge0} \sum_{i\ge m} \binom{i}{m} \left(1-x^k \right)^{-i} x^{i+(k+1)m} \left( 1+x+\cdots+x^{k-2} \right)^m y^m.
\end{align*}
Using the equations~\eqref{eq:NegPower} and~\eqref{eq:monomial}, we can extract the coefficient of $x^n y^m$ and get the first desired formula.
We can also rewrite 
\[ \left( 1+x+\cdots+x^{k-2} \right)^m = \frac{\left(1-x^{k-1} \right)^m}{(1-x)^m}. \]
Applying the binomial theorem and the equation~\eqref{eq:NegPower} to this gives the second desired formula.
\end{proof}

\begin{proof}[Proof 2]
Now we give a combinatorial proof for the two formulas of the number $a_{k,n}^{(m)}$.
By the proof of Theorem~\ref{thm:comp3}, this number enumerates binary sequences of length $n-1$ in which all but $m$ maximal substrings of zeros have length divisible by $k$.
Such a sequence can be written as 
\[ 0^{\alpha_1}1 0^{\alpha_2}1 \cdots 0^{\alpha_{i-1}} 1 0^{\alpha_i} \]
for some integer $i\ge m$.
There are $\binom{i}{m}$ ways to specify the $m$-subset 
\[ R:=\{r: k \nmid \alpha_r\} \subseteq\{1,2,\ldots,i\}. \]
For each $r\in R$, since $k\nmid \alpha_r\ge k$, there exist integers $a_r\ge0$ and $b_r\in\{0,1,\ldots,k-2\}$ such that
\[ \alpha_r = k+1 + a_r k + b_r. \]
For each $s\in\{1,2,\ldots,i\}\setminus R$, there exists an integer $a_s\ge0$ such that $\alpha_s = a_sk$.
The number of ways to choose the nonnegative integers $a_1,\ldots,a_i$ is $\binom{i+j-1}{j}$, where $j:= a_1+\cdots+a_i$.
The number of ways to choose the integers $b_r\in\{0,1,\ldots,k-2\}$ for all $r\in R$ is 
\[ \sum_{ \substack{ \lambda\subseteq(k-2)^m \\ i+(k+1)m+jk+|\lambda|=n } } m_\lambda(1^m) .\]
We can also write this as 
\[ \sum_{i+(k+1)m+jk+\ell(k-1)+h=n} (-1)^\ell \binom{m}{\ell} \binom{m+h-1}{h} \]
by applying inclusion-exclusion to the integer sequences $(b_r\ge0:r\in R)$ with $b_r\ge k-1$ for all $r$ in a prescribed $\ell$-subset $R'\subseteq R$.
\end{proof}

\section{Analogues of Beck's conjectures}\label{sec:Beck}

In this section we review some conjectures of Beck on partitions with restricted parts and provide analogues for compositions.

Let $a(n)$ be the number of partitions of $n$ with exactly one (possibly repeated) even part.
Let $b(n)$ be the difference between the number of parts in all partitions of $n$ into odd parts and the number of parts in all partitions of $n$ into distinct parts.
Let $c(n)$ be the number of partitions of $n$ in which exactly one part is repeated.
Beck~\cite[A090867]{OEIS} conjectured that $a(n)=b(n)$.
Andrews~\cite[Theorem 1]{Andrews} analytically proved that $a(n)=b(n)=c(n)$.
Fu and Tang~\cite[Theorem 1.5]{FuTang} extended the result of Andrews with an analytic proof.
Using Glaisher's bijection, Yang~\cite[Theorem 1.5]{Yang} generalized the above conjecture of Beck to the following result
\[ \# \mathcal O_{1,k}(n) = \frac{1}{k-1} \left( \sum_{\lambda\in\mathcal O_k(n)} \ell(\lambda) - \sum_{\lambda\in\mathcal D_k(n)} \ell(\lambda) \right) \]
where $\mathcal O_{1,k}(n)$, $\mathcal O_k(n)$, and $\mathcal D_k(n)$ are the sets of partitions of $n$ with exactly one (possibly repeated) part divisible by $k$, no part divisible by $k$, or no part occuring at least $k$ times, respectively.

Let $a_1(n)$ be the number of partitions of $n$ in which exactly one part occurs three times and each other part occurs only once.
Let $b_1(n)$ be the difference between the number of parts in all partitions of $n$ into distinct parts and the number of distinct parts in all partitions of $n$ into odd parts.
Beck~\cite[A090867]{OEIS} conjectured that $a_1(n) = b_1(n)$ and Andrews~\cite[Theorem 2]{Andrews} analytically proved this conjecture.
Extending Glaisher's bijection, Yang~\cite[Theorem 1.7]{Yang} proved a more general result 
\[ \# \mathcal T_k(n) = \sum_{\lambda\in\mathcal D_k(n)} \overline\ell(\lambda) - \sum_{\lambda\in\mathcal O_k(n)} \overline\ell(\lambda). \]
Here $\mathcal T_k(n)$ is the set of partitions of $n$ with one part occurring more than $k$ and less than $2k$ times and every other part occuring less than $k$ times, and $\overline\ell(\lambda)$ is the number of distinct parts of $\lambda$.

A partition $\lambda=(\lambda_1,\ldots,\lambda_\ell)$ is said to be \emph{gap-free} if $0\le \lambda_i-\lambda_{i+1}\le 1$ for all $i=1,2,\ldots,\ell-1$.
Let $a_2(n)$ be the number of gap-free partitions of $n$.
Let $b_2(n)$ be the sum of the smallest parts of all partitions of $n$ into an odd number of distinct parts.
%Andrews determined the generating function of $a_2(n)$.
Beck~\cite[A034296]{OEIS} conjectured that $a_2(n)=b_2(n)$.
Chern~\cite{Chern} analytically proved this conjecture based on work of Andrews~\cite{Andrews}.
Yang~\cite{Yang} combinatorially proved this conjecture and found connections with work of Wang, Fokkink, and Fokkink~\cite{WFF}.

Now that we have composition analogues for the partition theorems of Euler, Glaisher, and Franklin, it is natural to explore analogues of the above conjectures of Beck in the setting of compositions.
This would hopefully lead to some interesting results as well as connections to other work on compositions. 

An example is given by a special case of the number $a_{k,n}^{(m)}$ in Theorem~\ref{thm:comp3} and Theorem~\ref{thm:formula}.
According to OEIS~\cite[A029907]{OEIS}, the number $A_n:=a_{2,n+3}^{(1)}$ satisfies the following properties.
\begin{itemize}
\item 
One has $A_0=0$, $A_1=1$, and $A_{n+2} = A_{n+1}+A_{n}+F_{n+1}$ for $n\ge0$, where $F_n$ is the $n$th Fibonacci number.
\item
For $n\ge1$ one has the following simple closed formulas:
\[ A_n = \frac15((n+4)F_n + 2nF_{n-1}) 
= \sum_{0\le i\le n-1} \sum_{0\le j\le i/2} \binom{n-j-1}{j} .\]
\item
The number $A_n$ equals both the number of compositions of $n+1$ with exactly one even part and the number of parts in all compositions of $n$ with odd parts.
\end{itemize}  
The last statement above motivates the following result, which has an easy combinatorial proof.

\begin{proposition}
Let $k\ge2$, $1\le r\le k-1$, $1\le s\le k-r$, and $n\ge 0$.
Then the number of compositions of $n+s$ with one part congruent to $r+s$ and every other part congruent to $r$ modulo $k$ equals the number of parts in all compositions of $n$ with each part congruent to $r$ modulo $k$.
\end{proposition}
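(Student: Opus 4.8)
The plan is to prove the identity by exhibiting an explicit bijection, after reinterpreting the right-hand side as a count of \emph{marked} compositions. First I would observe that the number of parts in all compositions of $n$ with every part congruent to $r$ modulo $k$ equals the number of pairs $(\gamma,i)$, where $\gamma=(\gamma_1,\ldots,\gamma_\ell)$ is such a composition and $i\in\{1,\ldots,\ell\}$ singles out one of its parts: summing the single mark over all valid compositions simply recovers the total part count. Thus it suffices to set up a bijection between these marked compositions of $n$ and the compositions of $n+s$ on the left-hand side.

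The forward map would send $(\gamma,i)$ to the composition $\alpha$ obtained from $\gamma$ by adding $s$ to the marked part $\gamma_i$, leaving all other parts unchanged. Since $\gamma_i\ge1$ and $s\ge1$, the entry $\gamma_i+s$ is a positive integer, so $\alpha$ is a genuine composition, and it has size $n+s$ because exactly $s$ has been added. As $\gamma_i\equiv r\pmod k$, the new part satisfies $\gamma_i+s\equiv r+s\pmod k$, while every other part of $\alpha$ remains congruent to $r$. The hypothesis $1\le s\le k-r$ guarantees $s\not\equiv0\pmod k$, hence $r+s\not\equiv r\pmod k$, so $\alpha$ has exactly one part (in position $i$) congruent to $r+s$ and all remaining parts congruent to $r$ modulo $k$, as required.

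For the inverse I would start from a composition $\alpha=(\alpha_1,\ldots,\alpha_\ell)$ of $n+s$ with a unique part congruent to $r+s$ and all others congruent to $r$ modulo $k$. Since $r+s\not\equiv r$, the distinguished part is unambiguously located, say in position $i$; I recover the mark $i$ and define $\gamma$ by subtracting $s$ from $\alpha_i$. The one point requiring care, and the only place where the upper bound $s\le k-r$ is essential, is positivity: I must check $\alpha_i-s\ge1$. Here $s\le k-r$ forces $r+s\le k$, so the smallest positive integer congruent to $r+s$ modulo $k$ is $r+s$ itself; consequently $\alpha_i\ge r+s$, giving $\alpha_i-s\ge r\ge1$. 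Thus $\gamma$ is a composition of $n$ all of whose parts are congruent to $r$ modulo $k$, and $(\gamma,i)$ is a valid marked composition. The two maps are visibly mutually inverse, so they furnish the desired bijection. I do not expect a genuine obstacle here: the whole argument is a single ``add/subtract $s$ at a marked part'' operation, and the only real content is the bookkeeping that the congruence and positivity conditions line up precisely when $1\le s\le k-r$. The degenerate case $n=0$ is automatically consistent, since the empty composition has no part to mark and no part of size $n+s=s$ can be congruent to $r+s>s$, so both sides count nothing.
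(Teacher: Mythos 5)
Your proposal is correct and is essentially the paper's own proof: the same bijection that adds $s$ to a marked part of a composition of $n$ (all parts $\equiv r \pmod k$) and, conversely, subtracts $s$ from the unique part $\equiv r+s \pmod k$. In fact you spell out two details the paper leaves implicit --- that $s\not\equiv 0\pmod k$ makes the distinguished part unambiguous, and that $s\le k-r$ forces $\alpha_i\ge r+s$ so subtraction stays positive --- which only strengthens the argument.
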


\begin{proof}
Let $\alpha=(\alpha_1,\ldots,\alpha_\ell)$ be a composition of $n+s$ with $\alpha_i\equiv r+s \pmod k$ for some $i$ and $\alpha_j\equiv r \pmod k$ for all $j\ne i$.
Define $\alpha'_i:=\alpha_i-s$ and $\alpha'_j=\alpha_j$ for all $j\ne i$.
One sees that $\alpha'=(\alpha'_1,\ldots,\alpha'_\ell)$ is a composition of $n$ with $\alpha'_j\equiv r \pmod k$ for all $j=1,2,\ldots,\ell$, and we have a distinguished part $\alpha'_i$ of this composition.

Conversely, given a composition $\beta=(\beta_1,\ldots,\beta_\ell)$ of $n$ with $\beta_j\equiv r \pmod k$ for all $j=1,2,\ldots\ell$ and given a distinguished part $\beta_i$ for some $i$, we have a composition $\beta'=(\beta'_1,\ldots,\beta'_\ell)$ of $n+s$ defined by $\beta'_i:=\beta_i+s$ and $\beta'_j:=\beta_j$ for all $j\ne i$.
The composition $\beta'$ satisfies $\beta'_i\equiv r+s \pmod k$ and $\beta'_j\equiv r \pmod k$ for all $j\ne i$.
\end{proof}

Taking $k=2$ and $r=s=1$ in the above proposition gives the following corollary, which can be viewed as a composition analogue for the conjectures of Beck.

\begin{corollary}
For $n\ge0$, the number of compositions of $n+1$ with exactly one even part equals the number of parts in all compositions of $n$ with odd parts.
\end{corollary}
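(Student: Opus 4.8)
The plan is to recognize the Corollary as the special case $k=2$, $r=1$, $s=1$ of the Proposition established above, and to verify that the two statements coincide under this substitution. First I would confirm the hypotheses: when $k=2$, the constraint $1\le r\le k-1$ forces $r=1$, and then $1\le s\le k-r$ forces $s=1$, so the triple $(k,r,s)=(2,1,1)$ is admissible. Next I would translate the congruence conditions modulo $2$. A part congruent to $r+s=2$ modulo $2$ is precisely an even part, and a part congruent to $r=1$ modulo $2$ is precisely an odd part. Thus a composition of $n+s=n+1$ with one part congruent to $r+s$ and every other part congruent to $r$ is a composition of $n+1$ with exactly one even part, the remaining parts being automatically odd; and a composition of $n$ with every part congruent to $r$ is a composition of $n$ with all parts odd. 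Under these identifications the Proposition becomes exactly the Corollary.

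For concreteness I would also exhibit the bijection obtained by specializing the one in the proof of the Proposition, which is particularly transparent here. Starting from a composition of $n+1$ with exactly one even part, I subtract $1$ from that even part, obtaining a composition of $n$ with all parts odd together with a distinguished part, namely the part just modified. Conversely, from a composition of $n$ with all odd parts and a choice of one of its parts, I add $1$ to the chosen part to recover a composition of $n+1$ whose unique even part is the modified one. Summing over all compositions of $n$ into odd parts, the number of ways to distinguish a part equals the total number of parts, which yields the asserted equality.

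The only point needing attention, and the closest thing to an obstacle, is the well-definedness of the maps, that is, that subtracting $1$ from the even part keeps it positive. Since an even part is at least $2$, the modified part is at least $1$ and the output is a genuine composition; conversely, adding $1$ to an odd part produces an even part, so the two maps are mutually inverse. Apart from this positivity check there is nothing further to establish, the Corollary being a direct specialization of the already-proved Proposition.
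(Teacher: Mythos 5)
Your proposal is correct and matches the paper exactly: the paper obtains the corollary by taking $k=2$ and $r=s=1$ in the preceding proposition, whose proof is precisely the add-one/subtract-one bijection you spell out. Your verification of the hypotheses and the positivity check are fine details but add nothing beyond the paper's one-line specialization.
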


Next, let $B_n$ be the number of parts in all compositions of $n+1$ with parts greater than $1$; see OEIS~\cite[A010049]{OEIS}.
We give another analogue of Beck's conjectures.

\begin{proposition}\label{prop:BeckComp}
The number of parts in all compositions of $n$ with parts greater than $1$ equals the difference between the number of parts in all compositions of $n$ with odd parts and the number of parts in all compositions of $n+1$ with parts greater than $1$.
\end{proposition}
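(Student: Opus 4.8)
The plan is to prove the equivalent identity $q(n)=p(n)+p(n+1)$ for all $n\ge0$, where $p(n)$ denotes the number of parts in all compositions of $n$ with parts greater than $1$ (so that $B_n=p(n+1)$) and $q(n)$ denotes the number of parts in all compositions of $n$ with odd parts. I would attack this with bivariate generating functions that record the size of a composition by the variable $x$ and its number of parts by the variable $t$. For any set $S$ of admissible part sizes, a composition with all parts in $S$ is an arbitrary sequence of such parts, so its generating function is $\bigl(1-t\sum_{s\in S}x^s\bigr)^{-1}$. Taking $S=\{2,3,4,\dots\}$ gives
\[ F(x,t)=\frac{1}{1-t\,x^2/(1-x)}=\frac{1-x}{1-x-tx^2}, \]
and taking $S=\{1,3,5,\dots\}$ gives
\[ G(x,t)=\frac{1}{1-t\,x/(1-x^2)}=\frac{1-x^2}{1-x^2-tx}. \]

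The next step is to convert the part-marking variable $t$ into a genuine count of parts by differentiating in $t$ and setting $t=1$:
\[ \sum_{n\ge0}p(n)x^n=\left.\frac{\partial F}{\partial t}\right|_{t=1}=\frac{(1-x)x^2}{(1-x-x^2)^2}, \qquad \sum_{n\ge0}q(n)x^n=\left.\frac{\partial G}{\partial t}\right|_{t=1}=\frac{(1-x^2)x}{(1-x-x^2)^2}. \]
I would then verify the single algebraic identity
\[ x\sum_{n\ge0}q(n)x^n=(1+x)\sum_{n\ge0}p(n)x^n, \]
which holds because both sides equal $(1-x^2)x^2/(1-x-x^2)^2$, using $(1+x)(1-x)=1-x^2$. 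Extracting the coefficient of $x^{n+1}$ from this identity yields $q(n)=p(n+1)+p(n)$, which is exactly the assertion of the proposition.

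There is no serious obstacle here beyond bookkeeping; the only point requiring care is the shift from $n$ to $n+1$, which is precisely the role played by the factor $(1+x)/x$ relating the two series. It is worth recording that the two series factor as
\[ \sum_{n\ge0}q(n)x^n=\left(\frac{1-x^2}{1-x-x^2}\right)^2\frac{x}{1-x^2}, \qquad \sum_{n\ge0}p(n)x^n=\left(\frac{1-x}{1-x-x^2}\right)^2\frac{x^2}{1-x}, \]
the square being the generating function for the relevant class of compositions and the remaining factor being the generating function for a single marked part; this reflects the decomposition of a composition carrying one distinguished part into a prefix, the marked part, and a suffix, and it is what a purely combinatorial proof would have to exploit. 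One could alternatively try to track the number of parts through Sills' bijection of Theorem~\ref{thm:comp1}, but that route produces a relation also involving the Fibonacci numbers $F_n$ and the integer $n$, so it is less direct than the generating-function computation above.
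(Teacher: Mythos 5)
Your proof is correct, and it takes a genuinely different route from the paper's. The paper computes no generating functions for this proposition: it quotes Turban's closed formula $B_n=\frac15\left((2n+3)F_n-nF_{n-1}\right)$ for the number of parts in all compositions of $n+1$ with parts greater than $1$, combines it with the closed formula $A_n=\frac15\left((n+4)F_n+2nF_{n-1}\right)$ quoted from OEIS A029907 for the number of parts in all compositions of $n$ with odd parts, and verifies $B_{n-1}+B_n=A_n$ by a three-line Fibonacci manipulation. You instead mark parts with a second variable $t$, differentiate at $t=1$, and reduce the whole statement to the factorization $(1+x)(1-x)=1-x^2$ relating $\frac{(1-x)x^2}{(1-x-x^2)^2}$ and $\frac{(1-x^2)x}{(1-x-x^2)^2}$; your computations of $F$, $G$, and their $t$-derivatives are all correct, and the coefficient extraction at $x^{n+1}$ does yield $q(n)=p(n)+p(n+1)$, which is the proposition. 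What each approach buys: yours is fully self-contained (no reliance on externally quoted closed formulas), your prefix--marked-part--suffix factorization points toward the bijective proof the paper explicitly leaves open (it remarks that Sills' bijection ``seems not work''), and your method extends verbatim to general $k\ge2$, where the same computation gives $Q_k(x)=\frac{x(1-x^k)}{(1-x-x^k)^2}$ and $P_k(x)=\frac{x^k(1-x)}{(1-x-x^k)^2}$, hence $x^{k-1}Q_k(x)=(1+x+\cdots+x^{k-1})P_k(x)$ and $q_k(n)=p_k(n)+p_k(n+1)+\cdots+p_k(n+k-1)$ --- answering the paper's closing question about extending the result beyond $k=2$. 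The paper's route, in exchange, is shorter given the quoted formulas and keeps the explicit Fibonacci expressions for $A_n$ and $B_n$ in view, which your rational functions would only recover after partial-fraction work.
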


\begin{proof}
Turban~\cite[Equation~(2.12)]{Turban} showed that
\[ B_n = \frac 15 \left( (2n+3)F_n - nF_{n-1} \right) = \frac 15 \left( (n+3)F_n + nF_{n-2} \right). \]
It follows that
\begin{align*}
B_n+B_{n-1} &= \frac15 \left( (n+3)F_n+(n+2)F_{n-1} + nF_{n-2} + (n-1)F_{n-3} \right) \\
&= \frac15 \left( (n+4)F_n + 2nF_{n-1} \right) = A_n. 
\end{align*}
Thus $B_{n-1}=A_n-B_n$.
\end{proof}

%One may try to bijectively prove the above proposition; the bijection by Sills~\cite{Sills} seems not work.
%It would also be interesting to extend this result from $k=2$ to larger values of $k$.
%A composition $\alpha$ of $n$ with odd parts can be encoded by a binary sequence of length $n-1$ with no consecutive ones.
%Adding a zero in the end gives a binary sequence of length $n$ with no consecutive ones.
%Each zero followed by a zero in this sequence comes from a unique part of $\alpha$.
%For example, $\alpha=(5,1,3)$ corresponds to $0101\underline{0}\underline{0}01\underline{0}0$, where the underlined zeros are those followed by a zero.
%For $n=3$, $(1,1,1)$ corresponds to $(4)$ and $(3)$ corresponds to $(2,2)$

There could be of course other possible composition analogues for Beck's conjectures.
For instance, one can define a composition $\alpha=(\alpha_1,\ldots,\alpha_\ell)$ to be \emph{gap-free} if $|\alpha_i-\alpha_{i+1}|\le 1$ for all $i=1,2,\ldots,\ell-1$.
Although included in OEIS~\cite[A034297]{OEIS}, the number of gap-free compositions of $n$ still needs further study, and that may lead to connections to compositions with other kinds of part constraints.
%The sum of the smallest parts of all compositions of $n$ with odd parts greater than $1$ is 

\begin{remark}
Li and Wang~\cite{LiWang} bijectively proved a generalization of Proposition~\ref{prop:BeckComp} from $k=2$ to larger values of $k$, and obtained a new composition analogue of Beck's conjectures.
\end{remark}

\section*{Acknowledgment}

The author uses SageMath to discover and verify the main results in this paper.
He is grateful to Goerge Beck, Toufik Mansour, and Augustine Munagi for pointing out some typos and useful references on compositions with restricted parts.
He also thanks the anonymous referees for helpful comments and suggestions.

\end{document}